\begin{document}

\title{Affine and Conformal Submersions with Horizontal Distribution and Statistical Manifolds
}

\titlerunning{Submersions and Statistical Manifolds}        

\author{Mahesh T V \and K S Subrahamanian Moosath}

\institute{Department of Mathematics\\ Indian Institute of Space Science and Technology\\ Thiruvanathapuram, Kerala, INDIA-695547\\ 
\email{maheshtv.16@res.iist.ac.in}\\
\email{smoosath@iist.ac.in}
}

\date{Received: date / Accepted: date}

\maketitle
\begin{abstract}
We show that, for an affine submersion $\pi: \mathbf{M}\longrightarrow \mathbf{B}$ with horizontal distribution, $\mathbf{B}$ is a statistical manifold with the metric and connection induced from the statistical manifold $\mathbf{M}$. The concept of  conformal submersion with horizontal distribution is introduced, which is a generalization of affine submersion with horizontal distribution. Then proved a necessary and sufficient condition for $(\mathbf{M}, \nabla, g_M)$ to become a statistical manifold for a conformal submersion with horizontal distribution. A necessary and sufficient condition is obtained for the curve $\pi \circ \sigma$ to be a geodesic of $\mathbf{B}$, if $\sigma$ is a geodesic of $\mathbf{M}$ for $\pi: (\mathbf{M},\nabla) \longrightarrow (\mathbf{B},\nabla^*) $  a conformal submersion with horizontal distribution. Also, we obtained a necessary and sufficient condition for the tangent bundle $T\mathbf{M}$ to become a statistical manifold with respect to the Sasaki lift metric and the complete lift connection.  
\end{abstract}

\keywords{\small Statistical Manifold\and Conformal Submersion \and Tangent bundle}
\subclass{MSC 53A15  \and MSC 53C05 }

%

\section{Introduction}
Information geometry has emerged from the studies of invariant geometric structures involved in statistical inference. It defines a Riemannian metric together with dually coupled affine connections in a manifold of probability distributions \cite{amari2007methods,amari2016information}. Statistical manifold was originally introduced by S.L Lauritzen\cite{lauritzen1987statistical}, later Kurose \cite{kurose1990dual} reformulated this from the view point of affine differential geometry. O'Neill \cite{o1966fundamental}  defined a Riemannian submersion and obtained fundamental equations of Riemannian submersions. Also in \cite{n1983semi} O'Neill defined a semi-Riemannian submersion. Abe and Hasegawa \cite{abe2001affine} defined affine submersions with horizontal distribution and obtained the fundamental equations. Riemannian submersions from a statistical view point was first mentioned by Barndroff-Neilsen and Jupp \cite{barndorff1988differential}. For the semi-Riemannian submersion $\pi : (M, g_M)\rightarrow (B, g_B)$ Abe and Hasegawa \cite{abe2001affine} obtained a  necessary and sufficient condition for $(M, \nabla, g_M)$ to become a statistical manifold with respect to the affine submersion with horizontal distribution $\pi : (M, \nabla)\rightarrow (B, \nabla^*)$. Conformal submersion and the fundamental equations of conformal submersion were also studied by many researchers, see \cite{lornia1993fe}, \cite{gud1990har} for example. In this paper, first we study the statistical manifold structure for affine and conformal submersions with horizontal distribution. Then compared the geodesics of $\mathbf{M}$ and $\mathbf{B}$ for conformal submersion with horizontal distribution. Also, obtained the statistical manifold structure on the tangent bundle for an affine submersion with horizontal distribution.

The projection from the tangent bundle $T\mathbf{M}$ to the manifold $\mathbf{M}$ can be considered as a submersion. Mastuzoe and Inoguchi \cite{matsuzoe2003statistical} obtained a necessary and sufficient condition for the tangent bundle $T\mathbf{M}$ to become a statistical manifold with respect to Sasaki lift metric and horizontal lift connection and also with respect to horizontal lift metric and horizontal lift connection. We have shown that the submersion $\pi : (T\mathbf{M}, \nabla^c) \rightarrow (\mathbf{M}, \nabla )$ is an affine submersion with horizontal distribution and $\pi : (T\mathbf{M}, g^{s}) \rightarrow (\mathbf{M}, g)$ is a semi-Riemannian submersion. Also obtained a necessary and sufficient condition for 
$T\mathbf{M}$ to become a statistical manifold with respect to Sasaki lift metric and complete lift connection for an affine submersion with horizontal distribution.

In section $2$, we obtained a statistical structure on the manifold $\mathbf{B}$ induced by the affine submersion $\pi: \mathbf{M}\longrightarrow \mathbf{B}$ with the horizontal distribution  $\mathcal{H}(\mathbf{M}) = \mathcal{V}^{\perp}(\mathbf{M})$.  In section $3$, we  introduced the concept of conformal submersion with horizontal distribution which is a generalization of the affine submersion with horizontal distribution. For a conformal submersion of semi-Riemannian manifolds $\pi : (\mathbf{M}, g_M)\rightarrow (\mathbf{B}, g_B)$ we proved $\pi: (\mathbf{M},\nabla) \longrightarrow (\mathbf{B}, \nabla^{*}) $ is a conformal submersion with horizontal distribution if and only if  $\pi: (\mathbf{M},\overline{\nabla}) \longrightarrow (\mathbf{B}, \overline{\nabla^{*}}) $ is a conformal submersion with the same  horizontal distribution. Then we proved a necessary and sufficient condition for $(\mathbf{M}, \nabla, g_M)$ to become a statistical manifold for conformal submersion with horizontal distribution. This is a generalization of the theorem for affine submerions with horizontal distribution proved by Abe and Hasegawa \cite{abe2001affine}. Also, for a conformal submersion with horizontal distribution we proved a necessary and sufficient condition for $\pi \circ \sigma$ to become a geodesic of $\mathbf{B}$, if $\sigma$ is a geodesic of $\mathbf{M}$.  In section $4$, first we proved that the submersion $\pi : (T\mathbf{M}, \nabla^c) \rightarrow (\mathbf{M}, \nabla )$ is an affine submersion with horizontal distribution and the submersion $\pi : (T\mathbf{M}, g^{s}) \rightarrow (\mathbf{M}, g)$ is a semi-Riemannian submersion. Finally, we obtained a necessary and sufficient condition for $(T\mathbf{M},\nabla^{c},g^{s})$ to become a statistical manifold, where $g^{s}$ is the Sasaki lift metric and $\nabla^{c}$ is the complete lift of affine connection $\nabla$ on $\mathbf{M}$.\\ Throughout this paper, all objects are assumed to be smooth. 

\section{Statistical manifolds and semi-Riemannian submersions}

Abe and Hasegawa \cite{abe2001affine} defined an affine submersion with horizontal distribution and obtained the fundamental equations. Also for the semi-Riemannian submersion $\pi : (M, g_M)\rightarrow (B, g_B)$ they obtained a  necessary and sufficient condition for $(M, \nabla, g_M)$ to become a statistical manifold with respect to the affine submersion with horizontal distribution $\pi : (M, \nabla)\rightarrow (B, \nabla^*)$. In this section, we  show that on $\mathbf{B}$ the geometric structure $(\nabla^{'},\tilde{g})$ induced by the affine submersion $\pi: \mathbf{M}\longrightarrow \mathbf{B}$ with the horizontal distribution  $\mathcal{H}(\mathbf{M}) = \mathcal{V}^{\perp}(\mathbf{M})$ is a statistical manifold structure.

A semi-Riemannian manifold $(\mathbf{M},g)$ with a torsion free affine connection $\nabla$ is called a statistical manifold if $\nabla g$ is symmetric. For a statistical manifold $(\mathbf{M},\nabla,g)$ the dual connection  $\overline{\nabla}$ is defined by  
  \begin{equation}
   Xg(Y,Z) = g(\nabla_{X}Y,Z) + g(Y,\overline{\nabla}_{X} Z)
  \end{equation}
 for  vector fields $X,Y$ and $Z$ in $\mathcal{X}(\mathbf{M})$, where $\mathcal{X}(\mathbf{M})$ denotes the set of all vector fields on $\mathbf{M}$. If $(\nabla,g)$ is a statistical structure on 
 $\mathbf{M}$ so is $(\overline{\nabla},g)$. Then $(\mathbf{M},\overline{\nabla},g)$ becomes a statistical manifold,  called the dual statistical manifold of $(\mathbf{M},\nabla,g)$.  Let $R^{\nabla}$ and $R^{\overline{\nabla}}$ be the curvature tensors of $\nabla$ and $\overline{\nabla}$, respectively. It follows from $(1)$ that 
 \begin{equation}
 g(R^{\nabla}(X,Y)Z,W) = -g(Z,R^{\overline{\nabla}}(X,Y)W)
 \end{equation}
for  $X,Y,Z$ and $W$ in  $\mathcal{X}(\mathbf{M})$. We say $(\mathbf{M},\nabla,\overline{\nabla},g)$ has constant curvature $k$ if
 \begin{equation}
 R^{\nabla}(X,Y)Z  = k \lbrace g(Y,Z)X - g(X,Z)Y \rbrace.
 \end{equation}
 A statistical manifold with  curvature zero is called a flat statistical manifold and in that case $(\mathbf{M},\nabla,\overline{\nabla},g)$  is called a dually flat statistical manifold.


Let $\mathbf{M}$ be an $n$ dimensional manifold and $\mathbf{B}$ be an $m$ dimensional manifold $(n>m)$. An onto map $\pi :\mathbf{M}\longrightarrow \mathbf{B}$ is called a submersion if $\pi_{*p} : T_{p}\mathbf{M} \longrightarrow T_{\pi(p)}\mathbf{B}$ is onto for all $p \in \mathbf{M}$. For a submersion $\pi :\mathbf{M}\longrightarrow \mathbf{B}$, $\pi^{-1}(b)$ is a submanifold of $\mathbf{M}$ of dimension $n-m$ for each $b \in \mathbf{B} $. These submanifolds $\pi^{-1}(b)$  are called fibers. Set $\mathcal{V}(\mathbf{M})_{p}$ = $Ker(\pi_{*p})$ for each $p \in \mathbf{M}$.

\begin{definition}
A submersion $\pi :\mathbf{M}\longrightarrow \mathbf{B}$ is called a submersion with horizontal distribution if there is a smooth distribution $p\longrightarrow \mathcal{H}(M)_{p}$ such that 
\begin{equation}
T_{p}\mathbf{M} = \mathcal{V}(\mathbf{M})_{p} \bigoplus \mathcal{H}(M)_{p}.
\end{equation}
\end{definition}

We call $\mathcal{V}(\mathbf{M})_{p}$ ( $\mathcal{H}(M)_{p}$) the vertical ( horizontal) subspace of $T_{p}\mathbf{M}$. $ \mathcal{H}$ and $\mathcal{V}$ denote the projection of the tangent space of $\mathbf{M}$ onto the horizontal and vertical subspaces, respectively.
\begin{note}
Let $\pi :\mathbf{M}\longrightarrow \mathbf{B}$  be a submersion with horizontal distribution  $\mathcal{H}(M)$. Then $\pi_{*}\mid_{\mathcal{H}(M)_{p}}:\mathcal{H}(M)_{p} \longrightarrow T_{\pi(p)}\mathbf{B}$ is an isomorphism for each $p \in \mathbf{M}$.
\end{note}

\begin{definition}
Let $(\mathbf{M},g_{M})$, $(\mathbf{B},g_{B})$ be semi-Riemannian manifolds of dimension  $n,m$ respectively $(n>m)$ . A submersion $\pi :\mathbf{M}\longrightarrow \mathbf{B}$ is called a semi-Riemannian submersion if all the fibers are semi-Riemannian submanifolds of $\mathbf{M}$ and $\pi_{*}$ preserves the length of horizontal vectors.
\end{definition}

\begin{note}
A vector field $Y$ on $\mathbf{M}$ is said to be projectable if there exist a vector field $Y_{*}$ on $\mathbf{B}$ such that $\pi_{*}(Y_{p})$ = $Y_{*\pi(p)}$ for each $p \in \mathbf{M}$, that is $Y$ and $Y_{*}$ are $\pi$- related. A vector field $X$ on $\mathbf{M}$ is said to be basic if it is projectable and horizontal. Every vector field $X$ on $\mathbf{B}$ has a unique smooth horizontal lift,  denoted by $\tilde{X}$, to $\mathbf{M}$. 
\end{note}

\begin{definition}
Let $\nabla$ and $\nabla^{*}$ be affine connections on $\mathbf{M}$ and $\mathbf{B}$ respectively. $\pi : (\mathbf{M},\nabla) \longrightarrow (\mathbf{B},\nabla^{*})$ is said to be an affine submersion with horizontal distribution if $\pi: \mathbf{M}\longrightarrow \mathbf{B}$ is a submersion with horizontal distribution and satisfies $\mathcal{H}(\nabla_{\tilde{X}} \tilde{Y})$ = $(\nabla^{*}_{X}Y)^{\tilde{}}$ for all vector fields $X,Y$ on $\mathbf{B}$.
\end{definition}



\begin{note} 
Abe and Hasegawa \cite{abe2001affine} proved that the connection $\nabla$ on $\mathbf{M}$ induces a connection $\nabla^{'}$ on $\mathbf{B}$ when $\pi: \mathbf{M}\longrightarrow \mathbf{B}$ is a submersion with horizontal distribution and $\mathcal{H}(\nabla_{\tilde{X}}\tilde{Y})$ is projectable for all vector fields $X$ and $Y$ on $\mathbf{B}$.\\ 

A connection $\mathcal{V}\nabla \mathcal{V}$ on the subbundle $\mathcal{V}(\mathbf{M})$ is defined by $(\mathcal{V}\nabla \mathcal{V})_{E}V$ = $\mathcal{V}(\nabla_{E}V)$ for any vertical vector field $V$ and any vector field $E$ on $\mathbf{M}$. For each $b \in \mathbf{B}$,  $\mathcal{V}\nabla \mathcal{V}$ induces a unique connection $\hat{\nabla}^{ b}$ on the fiber $\pi^{-1}(b)$. The torsion of $\nabla$ is denoted by $Tor(\nabla)$. Abe and Hasegawa \cite{abe2001affine} proved that if $\nabla$ is torsion free, then $\hat{\nabla}^{b}$ and $\nabla^{'}$ are also torsion free.
\end{note}

\begin{definition}
Let $\pi: (\mathbf{M},\nabla)\longrightarrow (\mathbf{B},\nabla^{*})$ be an affine submersion with horizontal distribution $\mathcal{V}^{\perp}(\mathbf{M})$, $g$ be a semi-Riemannian metric on $\mathbf{M}$ and $\mathcal{H}(\nabla_{\tilde{X}}\tilde{Y})$ be projectable. Define the induced semi-Riemannian metric  $\tilde{g}$ and the induced connection $\nabla^{'} $ on $\mathbf{B}$ as

\begin{eqnarray}
\tilde{g}(X,Y)&=& g(\tilde{ X},\tilde{Y})\\
 \nabla^{'}_{X}Y &=& \pi_{*}(\nabla_{\tilde{X}}\tilde{Y})
\end{eqnarray}
where $X,Y$ are vector fields on $\mathbf{B}$.
\end{definition}
Now we show that $(\mathbf{B},\nabla^{'},\tilde{g})$ is a statistical manifold.

\begin{theorem}
Let $(\mathbf{M},\nabla,g)$ be a statistical manifold and $\pi: \mathbf{M} \longrightarrow \mathbf{B}$ be an affine submersion with horizontal distribution $\mathcal{H}(\mathbf{M}) = \mathcal{V}^{\perp}(\mathbf{M})$ and $\mathcal{H}(\nabla_{\tilde{X}}\tilde{Y})$ be projectable. Then $( \mathbf{B},\nabla^{'},\tilde{g})$ is a statistical manifold.
\end{theorem}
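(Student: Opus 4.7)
The plan is to verify the two defining axioms of a statistical manifold for $(\mathbf{B}, \nabla', \tilde{g})$: (i) $\nabla'$ is torsion-free, and (ii) $\nabla'\tilde{g}$ is symmetric, i.e.\ $(\nabla'_X\tilde{g})(Y,Z) = (\nabla'_Y\tilde{g})(X,Z)$ for all vector fields $X,Y,Z$ on $\mathbf{B}$. Condition (i) is immediate: since $\nabla$ is torsion-free (as $(\mathbf{M},\nabla,g)$ is statistical), the note following Definition 5 cites the Abe--Hasegawa result giving torsion-freeness of the induced connection $\nabla'$.

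The substantive part is (ii), and the strategy is to transfer the symmetry of $\nabla g$ on $\mathbf{M}$ to $\nabla'\tilde{g}$ on $\mathbf{B}$ by horizontal lifting. The two bridge identities I will establish are:
\begin{equation*}
\bigl(X\,\tilde{g}(Y,Z)\bigr)\circ\pi \;=\; \tilde{X}\bigl(g(\tilde{Y},\tilde{Z})\bigr),\qquad \tilde{g}(\nabla'_X Y, Z)\circ\pi \;=\; g(\nabla_{\tilde{X}}\tilde{Y},\tilde{Z}).
\end{equation*}
The first identity follows because $g(\tilde{Y},\tilde{Z}) = \tilde{g}(Y,Z)\circ\pi$ by definition of $\tilde{g}$, together with the $\pi$-relatedness of $\tilde{X}$ and $X$. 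For the second, I will decompose $\nabla_{\tilde{X}}\tilde{Y} = \mathcal{H}(\nabla_{\tilde{X}}\tilde{Y}) + \mathcal{V}(\nabla_{\tilde{X}}\tilde{Y})$; the definition of affine submersion with horizontal distribution gives $\mathcal{H}(\nabla_{\tilde{X}}\tilde{Y}) = (\nabla'_X Y)^{\tilde{}}$, and since $\mathcal{H}(\mathbf{M}) = \mathcal{V}^{\perp}(\mathbf{M})$ the vertical piece is $g$-orthogonal to $\tilde{Z}$, so that $g(\nabla_{\tilde{X}}\tilde{Y},\tilde{Z}) = g((\nabla'_X Y)^{\tilde{}},\tilde{Z}) = \tilde{g}(\nabla'_X Y, Z)\circ\pi$.

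With these two identities in hand, the symmetry of $\nabla g$ applied to the horizontal lifts $\tilde{X},\tilde{Y},\tilde{Z}$ reads
\begin{equation*}
\tilde{X}g(\tilde{Y},\tilde{Z}) - g(\nabla_{\tilde{X}}\tilde{Y},\tilde{Z}) - g(\tilde{Y},\nabla_{\tilde{X}}\tilde{Z}) \;=\; \tilde{Y}g(\tilde{X},\tilde{Z}) - g(\nabla_{\tilde{Y}}\tilde{X},\tilde{Z}) - g(\tilde{X},\nabla_{\tilde{Y}}\tilde{Z}),
\end{equation*}
and translating every term via the two bridge identities yields $(\nabla'_X\tilde{g})(Y,Z) = (\nabla'_Y\tilde{g})(X,Z)$ after pulling back through $\pi$ (which is surjective, so equality of the pullbacks implies equality on $\mathbf{B}$).

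The only delicate point I foresee is the second bridge identity, since it is where the geometric hypothesis $\mathcal{H}(\mathbf{M}) = \mathcal{V}^{\perp}(\mathbf{M})$ is really used to kill the vertical contribution of $\nabla_{\tilde{X}}\tilde{Y}$; everything else is bookkeeping. In particular, the hypothesis that $\mathcal{H}(\nabla_{\tilde{X}}\tilde{Y})$ is projectable is what makes $\nabla'$ (and hence the whole statement) well-defined in the first place, as recalled in the note before Definition 6.
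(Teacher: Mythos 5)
Your proof is correct and follows essentially the same route as the paper: the paper computes $(\nabla'_{X}\tilde{g})(Y,Z)=(\nabla_{\tilde{X}}g)(\tilde{Y},\tilde{Z})$ in one chain of equalities and invokes symmetry of $\nabla g$, which is exactly your argument with the two bridge identities (and the orthogonality of the vertical part of $\nabla_{\tilde{X}}\tilde{Y}$ to $\tilde{Z}$) left implicit. Your version merely spells out those justifications and the torsion-freeness of $\nabla'$, which the paper delegates to the earlier note citing Abe--Hasegawa.
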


\begin{proof}
Let $X,Y,Z$ be arbitrary vector fields on $\mathbf{B}$, we have
\begin{eqnarray*}
(\nabla^{'}_{X}\tilde{g})(Y,Z) &=& X\tilde{g}(Y,Z)-\tilde{g}(\nabla^{'}_{X}Y,Z)-\tilde{g}(Y,\nabla^{'}_{X}Z)\\
 &=& \tilde{X}g(\tilde{Y},\tilde{Z})-g(\nabla_{\tilde{X}}\tilde{Y},\tilde{Z})-g(\tilde{Y},\nabla_{\tilde{X}}\tilde{Z})\\
 &=& (\nabla_{\tilde{X}}g)(\tilde{Y},\tilde{Z})
\end{eqnarray*}
Since $(\mathbf{M},\nabla,g)$ is a statistical manifold, $( \mathbf{B},\nabla^{'},\tilde{g})$ is also a statistical manifold.
\end{proof}

\begin{definition}
 Let $\pi : (\mathbf{M},\nabla,g_{M}) \longrightarrow (\mathbf{B},\nabla^{*},g_{B})$ be an affine submersion with horizontal distribution $\mathcal{H}(M)$. Then the fundamental tensors $T$ and $A$ are defined as
\begin{eqnarray}
T_{E}F &=& \mathcal{H}(\nabla_{\mathcal{V}E}\mathcal{V}F)+\mathcal{V}(\nabla_{\mathcal{V}E}\mathcal{H}F)\\
A_{E}F &=&  \mathcal{V}(\nabla_{\mathcal{H}E}\mathcal{H}F)+\mathcal{H}(\nabla_{\mathcal{H}E}\mathcal{V}F)
\end{eqnarray}
for arbitrary vector fields $E$ and $F$ on $\mathbf{M}$. Also we denote the the fundamental tensors corresponds to the dual connection $\overline{\nabla}$ of $\nabla$ by $\overline{T}$ and $\overline{A}$.
\end{definition}

It is easy to check that these are $(1, 2)$-tensors. Note that these tensors can be defined in a general situation, namely, it is enough that a manifold $\mathbf{M}$ has a splitting $T\mathbf{M} = \mathcal{V}(\mathbf{M}) \bigoplus \mathcal{H}(\mathbf{M})$. Also note that $T_{E}$ and $A_{E}$ reverses the horizontal and vertical subspaces and $T_{E} = T_{\mathcal{V}E}$, $A_{E} = A_{\mathcal{H}E}$.

The inclusion map $(\pi^{-1}(b),\hat{\nabla}^{b}) \longrightarrow (\mathbf{M},\nabla)$ is an affine immersion in the sense of \cite{nomizu1994affine}.  The following equations are corresponding to the Gauss and Weingarten formulae.
Let $X$ and $Y$ be  horizontal vector fields, and $V$ and $W$ be vertical vector fields on $\mathbf{M}$. Then
\begin{eqnarray*}
\nabla_{V}W &=& T_{V}W+ \hat{\nabla}_{V}W\\
\nabla_{V}X &=&\mathcal{H}(\nabla_{V}X) +T_{V}X\\ 
\nabla_{X}V &=& \mathcal{V}(\nabla_{X}V) +A_{X}V\\
\nabla_{X}Y &=& \mathcal{H}(\nabla_{X}Y) +A_{X}Y
\end{eqnarray*}


\section{Conformal submersions with horizontal distribution}

Conformal submersions and the fundamental equations of conformal submersions were studied by many researchers, see \cite{lornia1993fe} and \cite{gud1990har} for example. In this section, we generalize the concept of affine submersion with horizontal distribution to conformal submersion with horizontal distribution. Then we prove $\pi: (\mathbf{M},\nabla) \longrightarrow (\mathbf{B}, \nabla^{*}) $ is a conformal submersion with horizontal distribution if and only if  $\pi: (\mathbf{M},\overline{\nabla}) \longrightarrow (\mathbf{B}, \overline{\nabla^{*}}) $ is a conformal submersion with the same  horizontal distribution. Also we obtained a necessary and sufficient condition for $(\mathbf{M}, \nabla, g_M)$ to become a statistical manifold for conformal submersion with horizontal distribution. Also obtained equations for $\mathcal{H}(E^{'})$ and $\mathcal{V}(E^{'})$ for a vector field $E$ on the curve $\sigma$ in $\mathbf{M}$ for a conformal submersion with horizontal distribution. Then for a conformal submersion with horizontal distribution we prove a necessary and sufficient condition for $\pi \circ \sigma$ to become a geodesic of $\mathbf{B}$, if $\sigma$ is a geodesic of $\mathbf{M}$.

\begin{definition}
Let $(\mathbf{M},g_{M})$ and $(\mathbf{B},g_{B})$ be Riemannian manifolds. A submersion $\pi: (\mathbf{M},g_{M}) \longrightarrow (\mathbf{B},g_{B}) $ is called a conformal submersion if there exists a $\phi \in C^{\infty}(\mathbf{M})$ such that 
\begin{equation}
g_{M}(X,Y) = e^{2 \phi}g_{B}(\pi_{*}X,\pi_{*}Y),
\end{equation}
where $X,Y \in \mathcal{X}(\mathbf{M})$.
\end{definition}

For $\pi : (\mathbf{M},\nabla) \longrightarrow (\mathbf{B},\nabla^*) $ an affine submersion with horizontal distribution $\pi_{*}(\nabla_{\tilde{X}}\tilde{Y}) = \nabla^{*}_{X}Y$, for $X,Y \in \mathcal{X}(\mathbf{B})$. In the case of conformal submersion we prove the following theorem, which is the motivation for us to generalize the concept of affine submersion with horizontal distribution. 

\begin{theorem}\label{cs12}
Let $\pi: (\mathbf{M},g_{M}) \longrightarrow (\mathbf{B},g_{B}) $  be a conformal submersion. If $\nabla$ on $\mathbf{M}$ and $\nabla^{*}$ on $B$ are the Levi-Civita connections then 
\begin{eqnarray*}
g_{B}(\pi_{*}(\nabla_{\tilde{X}}\tilde{Y}), Z) &=& g_{B}(\nabla^{*}_{X}Y, Z) - d\phi(\tilde{Z})g_{B}(X,Y)\\&+&\{d\phi(\tilde{X})g_{B}(Y,Z)+ d\phi(\tilde{Y})g_{B}(Z,X)\}
\end{eqnarray*}
where $X,Y,Z \in \mathcal{X}(\mathbf{B})$  and $\tilde{X},\tilde{Y},\tilde{Z}$ denote its unique horizontal lift on $\mathbf{M}$.
\end{theorem}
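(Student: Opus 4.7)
The plan is to derive the identity by applying the Koszul formula on both manifolds and comparing terms, exploiting the conformal relation $g_M(\tilde X,\tilde Y)=e^{2\phi}\,g_B(X,Y)\circ\pi$ together with the fact that horizontal lifts are $\pi$-related, so that $\pi_*[\tilde X,\tilde Y]=[X,Y]$.

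First I would write the Koszul formula for the Levi-Civita connection $\nabla$ of $g_M$ with the three horizontal lift arguments $\tilde X,\tilde Y,\tilde Z$:
\begin{align*}
2g_M(\nabla_{\tilde X}\tilde Y,\tilde Z) &= \tilde X g_M(\tilde Y,\tilde Z)+\tilde Y g_M(\tilde Z,\tilde X)-\tilde Z g_M(\tilde X,\tilde Y) \\
&\quad + g_M([\tilde X,\tilde Y],\tilde Z)-g_M([\tilde Y,\tilde Z],\tilde X)+g_M([\tilde Z,\tilde X],\tilde Y).
\end{align*}
Each scalar product of two horizontal lifts can be rewritten as $e^{2\phi}g_B(\cdot,\cdot)\circ\pi$. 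For the bracket terms, I would note that $[\tilde X,\tilde Y]$ decomposes into horizontal and vertical parts, with horizontal part equal to the horizontal lift of $[X,Y]$; since $\tilde Z$ is horizontal, $g_M([\tilde X,\tilde Y],\tilde Z)=e^{2\phi}\,g_B([X,Y],Z)\circ\pi$, and similarly for the other two bracket terms.

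Next I would apply the product rule to each directional derivative, e.g.\ $\tilde X(e^{2\phi}g_B(Y,Z)\circ\pi)=2\,d\phi(\tilde X)e^{2\phi}g_B(Y,Z)+e^{2\phi}(Xg_B(Y,Z))\circ\pi$, and similarly for the other two. Collecting the $e^{2\phi}(Xg_B(Y,Z)\pm\cdots)$ pieces together with the bracket contributions recovers exactly the Koszul formula for $2g_B(\nabla^*_X Y,Z)$ multiplied by $e^{2\phi}$, while the $d\phi$ pieces combine to $2e^{2\phi}\bigl\{d\phi(\tilde X)g_B(Y,Z)+d\phi(\tilde Y)g_B(Z,X)-d\phi(\tilde Z)g_B(X,Y)\bigr\}$.

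Finally I would rewrite the left-hand side using $g_M(\nabla_{\tilde X}\tilde Y,\tilde Z)=g_M(\mathcal H(\nabla_{\tilde X}\tilde Y),\tilde Z)=e^{2\phi}g_B(\pi_*(\nabla_{\tilde X}\tilde Y),Z)$ (because $\tilde Z$ is horizontal, only the horizontal part of $\nabla_{\tilde X}\tilde Y$ contributes, and $\pi_*$ restricted to the horizontal subspace is an $e^{\phi}$-conformal isomorphism onto $T\mathbf B$), and then divide through by $2e^{2\phi}$ to obtain the claimed identity. The only delicate point — and the step where I would be most careful — is handling the bracket terms: one must verify that $[\tilde X,\tilde Y]$ projects to $[X,Y]$ and that its vertical part is killed by pairing with horizontal $\tilde Z$; nothing else in the argument is more than the product rule and bookkeeping.
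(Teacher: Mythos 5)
Your proposal is correct and follows essentially the same route as the paper: expand the Koszul formula for $\nabla$ on the horizontal lifts, convert each metric term via $g_M(\tilde X,\tilde Y)=e^{2\phi}g_B(X,Y)\circ\pi$, and collect the $d\phi$ terms against the Koszul formula for $\nabla^*$. Your explicit remark that the vertical part of $[\tilde X,\tilde Y]$ is annihilated by pairing with a horizontal lift is a point the paper passes over silently, but it is the same argument.
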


\begin{proof}
We have the Koszul formula for the Levi-Civita connection, 
\begin{eqnarray}\label{cs3}
2g_{M}(\nabla_{\tilde{X}}\tilde{Y},\tilde{Z}) &=& \tilde{X}g_{M}(\tilde{Y},\tilde{Z})+\tilde{Y}g_{M}(\tilde{Z},\tilde{X})-\tilde{Z}g_{M}(\tilde{X},\tilde{Y})-g_{M}(\tilde{X},[\tilde{Y},\tilde{Z}]) \nonumber \\ &+&g_{M}(\tilde{Y},[\tilde{Z},\tilde{X}])+g_{M}(\tilde{Z},[\tilde{X},\tilde{Y}])
\end{eqnarray}
Now consider 
\begin{eqnarray*}
\tilde{X}g_{M}(\tilde{Y},\tilde{Z}) &=& \tilde{X}(e^{2\phi}g_{B}(Y,Z))\\
                                    &=& \tilde{X}(e^{2\phi})g_{B}(Y,Z)+                          e^{2\phi}\tilde{X}(g_{B}(Y,Z))\\
                                    &=& 2e^{2\phi}d\phi(\tilde{X})g_{B}(Y,Z)+e^{2\phi}Xg_{B}(Y,Z)\\                                    
\end{eqnarray*}
Similarly we have, 
\begin{eqnarray*}
\tilde{Y}g_{M}(\tilde{X},\tilde{Z}) &=& 2e^{2\phi}d\phi(\tilde{Y})g_{B}(X,Z)+e^{2\phi}Yg_{B}(X,Z)\\
\tilde{Z}g_{M}(\tilde{X},\tilde{Y}) &=& 2e^{2\phi}d\phi(\tilde{Z})g_{B}(X,Y)+e^{2\phi}Zg_{B}(X,Y)
\end{eqnarray*}
Also we have 
\begin{eqnarray*}
g_{M}(\tilde{X}, [\tilde{Y},\tilde{Z}]) &=& e^{2\phi}g_{B}(X,[Y,Z])\\
g_{M}(\tilde{Y}, [\tilde{Z},\tilde{X}]) &=& e^{2\phi}g_{B}(Y,[Z,X])\\
g_{M}(\tilde{Z}, [\tilde{X},\tilde{Y}]) &=& e^{2\phi}g_{B}(Z,[X,Y])
\end{eqnarray*}
Then from equation (\ref{cs3}) and above equations we get 
\begin{eqnarray*}
2g_{M}(\nabla_{\tilde{X}}\tilde{Y},\tilde{Z}) &=& 2d\phi(\tilde{X})e^{2\phi}g_{B}(Y,Z) + 2d\phi(\tilde{Y})e^{2\phi}g_{B}(X,Z)\\ &-& 2d\phi(\tilde{Z})e^{2\phi}g_{B}(X,Y)+ 2e^{2\phi}g_{B}(\nabla^{*}_{X}Y,Z)
\end{eqnarray*}
This implies 
\begin{eqnarray*}
g_{B}(\pi_{*}(\nabla_{\tilde{X}}\tilde{Y}), Z) &=& g_{B}(\nabla^{*}_{X}Y, Z) - d\phi(\tilde{Z})g_{B}(X,Y)\\&+&\{d\phi(\tilde{X})g_{B}(Y,Z)+ d\phi(\tilde{Y})g_{B}(Z,X)\}
\end{eqnarray*}
\end{proof}

Now we generalize the concept of affine submersion with horizontal distribution as follows: 

\begin{definition} Let $\pi: (\mathbf{M},g_{M}) \longrightarrow (\mathbf{B},g_{B}) $  be a conformal submersion and let $\nabla$ and $\nabla^*$ be affine connections on $\mathbf{M}$ and $\mathbf{B}$, respectively. Then
$\pi : (\mathbf{M},\nabla) \longrightarrow (\mathbf{B}, \nabla^{*})$ is said to be a conformal submersion with horizontal distribution if $\pi: \mathbf{M}\longrightarrow \mathbf{B}$ is a submersion with horizontal distribution and satisfies 
 \begin{eqnarray*}
g_{B}(\pi_{*}(\nabla_{\tilde{X}}\tilde{Y}), Z) &=& g_{B}(\nabla^{*}_{X}Y, Z) - d\phi(\tilde{Z})g_{B}(X,Y)\\&+&\{d\phi(\tilde{X})g_{B}(Y,Z)+ d\phi(\tilde{Y})g_{B}(Z,X)\}
\end{eqnarray*}
for some $\phi \in C^{\infty}(\mathbf{M})$ and for all $X,Y,Z \in \mathcal{X}(\mathbf{B})$.
\end{definition}

\begin{note}
If $\phi$ is constant it turns out to be an affine submersion with horizontal distribution. 
\end{note}
\begin{example}
Let $H^{n} = \lbrace (x_{1},...,x_{n}) \in \mathbf{R}^{n} : x_{n} > 0 \rbrace$ and $\tilde{g} = \frac{1}{x_{n}^{2}} g$ be a Riemannian metric on $H^{n}$, where $g$ is the Euclidean metric on $\mathbf{R}^{n}$. Let $\pi : H^{n} \longrightarrow \mathbf{R}^{n-1}$ be the map defined by 
\begin{eqnarray*}
\pi(x_{1},...,x_{n}) = (x_{1},...,x_{n-1}).
\end{eqnarray*}
 Let $\phi : H^{n} \longrightarrow \mathbf{R}$ be a map defined by $\phi(x_{1},...,x_{n}) = \log(\frac{1}{x_{n}^{2}})$. Then we have 
\begin{eqnarray*}
\tilde{g}\left(\frac{\partial}{\partial x_{i}}, \frac{\partial}{\partial x_{j}}\right) = e^{\phi} g\left(\frac{\partial}{\partial x_{i}}, \frac{\partial}{\partial x_{j}}\right)
\end{eqnarray*}
hence, $\pi :(H^{n},\tilde{g}) \longrightarrow (\mathbf{R}^{n-1},g)$ is 
a conformal submersion. Then by the theorem (\ref{cs12}), $\pi :(H^{n},\nabla) \longrightarrow (\mathbf{R}^{n-1},\nabla^{*})$ be a conformal submersion with horizontal distribution, where $\nabla$ and $\nabla^{*}$ are Levi-Civita connection on $H^{n}$ and $\mathbf{R}^{n-1}$ respectively.
\end{example}

Now for semi-Riemannian manifolds $(\mathbf{M},g_{M})$, $(\mathbf{B},g_{B})$ with affine connections $\nabla$ and $\nabla^*$ and the dual connections $\overline{\nabla}$ and $\overline{\nabla^{*}}$ respectively we prove

\begin{proposition}
Let $\pi : (\mathbf{M},g_{M}) \longrightarrow (\mathbf{B},g_{B})$ be a conformal submersion. Then $\pi: (\mathbf{M},\nabla) \longrightarrow (\mathbf{B}, \nabla^{*}) $ is a conformal submersion with horizontal distribution $\mathcal{H}(\mathbf{M}) = \mathcal{V}(\mathbf{M})^{\perp}$ if and only if  $\pi: (\mathbf{M},\overline{\nabla}) \longrightarrow (\mathbf{B}, \overline{\nabla^{*}}) $ is a conformal submersion with the same  horizontal distribution. 
\end{proposition}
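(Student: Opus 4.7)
\medskip
\noindent\textbf{Proof plan.} The plan is to convert the defining identity for a conformal submersion with horizontal distribution with respect to $(\nabla,\nabla^{*})$ into the analogous identity for $(\overline{\nabla},\overline{\nabla^{*}})$ by differentiating the conformal relation $g_{M}(\tilde{X},\tilde{Y}) = e^{2\phi}g_{B}(X,Y)$ and exploiting the defining formula of the dual connection. Since taking the dual is an involution (i.e.\ $\overline{\overline{\nabla}}=\nabla$ and $\overline{\overline{\nabla^{*}}}=\nabla^{*}$), it suffices to prove just one direction.

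First I would write down the dual connection identity on $\mathbf{M}$, namely
\[
\tilde{X}\, g_{M}(\tilde{Y},\tilde{Z}) \;=\; g_{M}(\nabla_{\tilde{X}}\tilde{Y},\tilde{Z}) + g_{M}(\tilde{Y},\overline{\nabla}_{\tilde{X}}\tilde{Z}),
\]
and the analogous identity on $\mathbf{B}$ for $\nabla^{*}$ and $\overline{\nabla^{*}}$. Next I would compute $\tilde{X}\,g_{M}(\tilde{Y},\tilde{Z})$ directly from the conformal relation, producing the expression $2e^{2\phi}d\phi(\tilde{X})g_{B}(Y,Z)+e^{2\phi}X\,g_{B}(Y,Z)$. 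Combining these with the conformal relation on the inner products $g_{M}(\nabla_{\tilde{X}}\tilde{Y},\tilde{Z}) = e^{2\phi}g_{B}(\pi_{*}(\nabla_{\tilde{X}}\tilde{Y}),Z)$ (which uses that $\tilde{Z}$ is horizontal and $\mathcal{H}(\mathbf{M})=\mathcal{V}(\mathbf{M})^{\perp}$, so the vertical part of $\nabla_{\tilde{X}}\tilde{Y}$ drops out) and similarly for the $\overline{\nabla}$-term, and then dividing through by $e^{2\phi}$, would yield the master identity
\[
g_{B}(\pi_{*}(\nabla_{\tilde{X}}\tilde{Y}),Z) + g_{B}(Y,\pi_{*}(\overline{\nabla}_{\tilde{X}}\tilde{Z})) = 2d\phi(\tilde{X})g_{B}(Y,Z) + g_{B}(\nabla^{*}_{X}Y,Z) + g_{B}(Y,\overline{\nabla^{*}}_{X}Z).
\]
This identity holds purely because $\pi$ is a conformal submersion; no hypothesis on $\nabla$ has been used yet.

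Now I would assume that $\pi:(\mathbf{M},\nabla)\to(\mathbf{B},\nabla^{*})$ is a conformal submersion with horizontal distribution and substitute the defining formula for $g_{B}(\pi_{*}(\nabla_{\tilde{X}}\tilde{Y}),Z)$ into the master identity. After cancelling $g_{B}(\nabla^{*}_{X}Y,Z)$ and $d\phi(\tilde X)g_B(Y,Z)$ terms, the remaining expression isolates $g_{B}(Y,\pi_{*}(\overline{\nabla}_{\tilde{X}}\tilde{Z}))$ as
\[
g_{B}(Y,\overline{\nabla^{*}}_{X}Z) + d\phi(\tilde{Z})g_{B}(X,Y) + d\phi(\tilde{X})g_{B}(Y,Z) - d\phi(\tilde{Y})g_{B}(X,Z).
\]
Relabelling $Y\leftrightarrow Z$ then produces precisely the defining identity for a conformal submersion with horizontal distribution with respect to $(\overline{\nabla},\overline{\nabla^{*}})$ and the same conformal factor $\phi$ and horizontal distribution. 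The converse is obtained by the same computation applied to $\overline{\nabla}$ in place of $\nabla$, using $\overline{\overline{\nabla}}=\nabla$.

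\medskip
\noindent\textbf{Main obstacle.} The calculation is essentially forced, so the one point that needs care is the bookkeeping of the cyclic permutations of $X,Y,Z$ (in particular that the $d\phi$-terms from the conformal relation and those from the assumed defining identity conspire to give the correct relabelled expression rather than a spurious difference). A second subtle point is to verify that passing from $g_{M}(\nabla_{\tilde{X}}\tilde{Y},\tilde{Z})$ to $e^{2\phi}g_{B}(\pi_{*}(\nabla_{\tilde{X}}\tilde{Y}),Z)$ is legitimate despite $\nabla_{\tilde{X}}\tilde{Y}$ having a vertical component; this rests on $\mathcal{H}(\mathbf{M})=\mathcal{V}(\mathbf{M})^{\perp}$ and on $\pi_{*}$ being an isometry (up to the factor $e^{\phi}$) between horizontal subspaces and their images.
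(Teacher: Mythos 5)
Your proposal is correct and follows essentially the same route as the paper's proof: both compute $\tilde{X}g_{M}(\tilde{Y},\tilde{Z})$ once via the conformal relation together with the duality on $\mathbf{B}$ and once via the duality on $\mathbf{M}$, substitute the defining identity for $(\nabla,\nabla^{*})$, and isolate $g_{B}(Y,\pi_{*}(\overline{\nabla}_{\tilde{X}}\tilde{Z}))$ to recover the defining identity for $(\overline{\nabla},\overline{\nabla^{*}})$, with the converse by symmetry of dualization. The sign bookkeeping in your isolated expression matches the paper's displayed conclusion exactly.
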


\begin{proof}
Consider, 
 \begin{eqnarray*}
 \tilde{X}g_{M}(\tilde{Y},\tilde{Z}) &=& 2e^{2\phi}d\phi(\tilde{X})g_{B}(Y,Z)+e^{2\phi}Xg_{B}(Y,Z)\\
 &=& 2e^{2\phi}d\phi(\tilde{X})g_{B}(Y,Z)+ e^{2\phi}\{g_{B}(\nabla^{*}_{X}Y,Z)+ g_{B}(Y,\overline{\nabla^{*}}_{X}Z)\}
 \end{eqnarray*}
 Now consider
 \begin{eqnarray}\label{cs4}
 \tilde{X}g_{M}(\tilde{Y},\tilde{Z}) &=& g_{M}(\nabla_{\tilde{X}}\tilde{Y}, \tilde{Z}) + g_{M}(\tilde{Y}, \overline{\nabla}_{\tilde{X}}\tilde{Z}) \nonumber\\
 &=& e^{2\phi}g_{B}(\pi_{*}(\nabla_{\tilde{X}}\tilde{Y}),Z) + e^{2\phi}g_{B}(Y,\pi_{*}(\overline{\nabla}_{\tilde{X}}\tilde{Z}))
 \end{eqnarray}
 Since, 
  \begin{eqnarray}\label{cs5}
g_{B}(\pi_{*}(\nabla_{\tilde{X}}\tilde{Y}), Z) &=& g_{B}(\nabla^{*}_{X}Y, Z) - d\phi(\tilde{Z})g_{B}(X,Y)\nonumber\\&+&\{d\phi(\tilde{X})g_{B}(Y,Z)+ d\phi(\tilde{Y})g_{B}(Z,X)\}
\end{eqnarray}
from (\ref{cs4}) and (\ref{cs5}) we get 
\begin{eqnarray*}
g_{B}(\pi_{*}(\overline{\nabla}_{\tilde{X}}\tilde{Z}), Y) &=& g_{B}(\overline{\nabla^{*}}_{X}Z, Y) - d\phi(\tilde{Y})g_{B}(X,Z)\\&+&\{d\phi(\tilde{X})g_{B}(Y,Z)+ d\phi(\tilde{Z})g_{B}(X,Y)\}
\end{eqnarray*}
Hence,  $\pi: (\mathbf{M},\overline{\nabla}) \longrightarrow (\mathbf{B}, \overline{\nabla^{*}}) $ is a conformal submersion with horizontal distribution.\\ Converse is obtained by interchanging  $\nabla$, $\nabla^{*}$  with $\overline{\nabla}$, $\overline{\nabla^{*}}$ in the above proof.
 
\end{proof}

\begin{lemma}
Let $\pi : (\mathbf{M},g_{M}) \longrightarrow (\mathbf{B},g_{B})$ be a conformal submersion and $\pi: (\mathbf{M},\nabla) \longrightarrow (\mathbf{B}, \nabla^{*}) $ be a conformal submersion with horizontal distribution $\mathcal{V}(\mathbf{M})^{\perp}$, then for horizontal vectors $X, Y$ and vertical vectors $U,V,W$ 
\begin{eqnarray}
\label{cs6}(\nabla_{\tilde{X}} g_{M})(\tilde{X}_{1},\tilde{X}_{2}) &=& e^{2\phi}(\nabla^{*}_{X}g_{B})(X_{1},X_{2})\\
\label{cs7}(\nabla_{V}g_{M})(X,Y) &=& -g_{M}(S_{V}X,Y)\\
\label{cs8}(\nabla_{X}g_{M})(V,Y) &=& -g_{M}(A_{X}V,Y)+ g_{M}(\overline{A}_{X}V,Y)\\
\label{cs9}(\nabla_{X}g_{M})(V,W) &=& -g_{M}(S_{X}V,W)\\
\label{cs10}(\nabla_{V}g_{M})(X,W) &=& -g_{M}(T_{V}X,W)+ g_{M}(\overline{T}_{V}X,W)\\
\label{cs11}(\nabla_{U}g_{M})(V,W) &=& (\hat{\nabla}_{U}\hat{g}_{M})(V,W) 
\end{eqnarray}
where $\tilde{X}_{i}$ are the horizontal lift of vector fields $X_{i}$ on $\mathbf{B}$, $\hat{g}$ is the induced metric on the fibers and $S_{V}X = \nabla_{V}X - \overline{\nabla}_{V}X$.
\end{lemma}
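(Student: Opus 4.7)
The common starting point for all six identities is the expansion
$(\nabla_{E}g_{M})(F_{1},F_{2}) = E\,g_{M}(F_{1},F_{2}) - g_{M}(\nabla_{E}F_{1},F_{2}) - g_{M}(F_{1},\nabla_{E}F_{2})$ together with the duality relation (1), which yields the compact form
$(\nabla_{E}g_{M})(F_{1},F_{2}) = -\,g_{M}(S_{E}F_{1},F_{2}) = -\,g_{M}(F_{1},S_{E}F_{2})$,
where $S_{E}F := \nabla_{E}F-\overline{\nabla}_{E}F$. This single observation instantly gives equations (\ref{cs7}) and (\ref{cs9}), since there the arguments $F_{1},F_{2}$ are already of the correct type and no further decomposition is needed.

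For equations (\ref{cs8}) and (\ref{cs10}) the plan is to feed the decomposition formulas for $\nabla$ and $\overline{\nabla}$ displayed at the end of Section~2 into $g_{M}(S_{E}F_{1},F_{2})$. Concretely, for (\ref{cs8}) I use $\nabla_{X}V = \mathcal{V}(\nabla_{X}V) + A_{X}V$ and its dual analogue $\overline{\nabla}_{X}V = \mathcal{V}(\overline{\nabla}_{X}V) + \overline{A}_{X}V$; pairing with the horizontal $Y$ kills the vertical parts by orthogonality of $\mathcal{V}^{\perp}$ and yields $g_{M}(S_{X}V,Y) = g_{M}(A_{X}V,Y)-g_{M}(\overline{A}_{X}V,Y)$. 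The same recipe with $T$ in place of $A$ proves (\ref{cs10}). For (\ref{cs11}) I plug in $\nabla_{U}W = T_{U}W + \hat{\nabla}_{U}W$ (and its $\overline{\nabla}$-analogue); the $T$ and $\overline{T}$ pieces are horizontal and drop out when paired with the vertical $W$ inside $g_{M}$, leaving precisely $(\hat{\nabla}_{U}\hat{g}_{M})(V,W)$ after recognizing the restricted metric.

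The only genuinely new computation is (\ref{cs6}), and this is where the conformal submersion with horizontal distribution enters. The plan is to expand $(\nabla_{\tilde{X}}g_{M})(\tilde{X}_{1},\tilde{X}_{2})$ directly, replace $\tilde{X}\,g_{M}(\tilde{X}_{1},\tilde{X}_{2}) = \tilde{X}(e^{2\phi}g_{B}(X_{1},X_{2}))$ and use the defining identity of a conformal submersion with horizontal distribution to rewrite the two terms $g_{M}(\nabla_{\tilde{X}}\tilde{X}_{1},\tilde{X}_{2})$ and $g_{M}(\tilde{X}_{1},\nabla_{\tilde{X}}\tilde{X}_{2})$ (only their horizontal components survive the pairing with $\tilde{X}_{2}$ or $\tilde{X}_{1}$). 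I expect the principal obstacle to be purely bookkeeping: six $d\phi$-cross terms appear, four of which must cancel in symmetric pairs using the symmetry of $g_{B}$, leaving exactly one surviving $2\,d\phi(\tilde{X})\,g_{B}(X_{1},X_{2})$ term that kills the corresponding contribution coming from differentiating $e^{2\phi}$, so that the right-hand side collapses to $e^{2\phi}\{X g_{B}(X_{1},X_{2}) - g_{B}(\nabla^{*}_{X}X_{1},X_{2})-g_{B}(X_{1},\nabla^{*}_{X}X_{2})\} = e^{2\phi}(\nabla^{*}_{X}g_{B})(X_{1},X_{2})$. Once this cancellation is checked carefully, the lemma is complete.
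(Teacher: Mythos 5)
Your proposal is correct and takes essentially the same route as the paper: for (\ref{cs6}) the paper performs exactly the expansion you describe, substituting the defining identity of a conformal submersion with horizontal distribution into both covariant-derivative terms so that the $d\phi(\tilde{X}_{1})$ and $d\phi(\tilde{X}_{2})$ cross terms cancel in pairs and the remaining $2e^{2\phi}d\phi(\tilde{X})g_{B}(X_{1},X_{2})$ absorbs the derivative of $e^{2\phi}$. The paper dispatches the other five identities with ``similarly we can prove other equations,'' and your duality relation $(\nabla_{E}g_{M})(F_{1},F_{2})=-g_{M}(S_{E}F_{1},F_{2})$ combined with the Gauss--Weingarten decompositions for $\nabla$ and $\overline{\nabla}$ is the correct way to fill that in.
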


\begin{proof}
Consider 
\begin{eqnarray*}
(\nabla_{\tilde{X}} g_{M})(\tilde{X}_{1},\tilde{X}_{2}) &=& \tilde{X}g_{M}(\tilde{X}_{1},\tilde{X}_{2}) - g_{M}(\nabla_{X}\tilde{X}_{1},\tilde{X}_{2})- g_{M}(\tilde{X}_{1},\nabla_{X}\tilde{X}_{2})\\
&=& \tilde{X}e^{2\phi}g_{B}(X_{1},X_{2}) -e^{2\phi}g_{B}(\pi_{*}(\nabla_{\tilde{X}}\tilde{X}_{1}),X_{2})\\&-&e^{2\phi}g_{B}(X_{1},\pi_{*}(\nabla_{\tilde{X}}\tilde{X}_{2}))\\
&=& 2e^{2\phi}d\phi(\tilde{X})g_{B}(X_{1},X_{2})+ e^{2\phi}Xg_{B}(X_{1},X_{2})\\&-&e^{2\phi}g_{B}(\pi_{*}(\nabla_{\tilde{X}}\tilde{X}_{1}),X_{2})-e^{2\phi}g_{B}(X_{1},\pi_{*}(\nabla_{\tilde{X}}\tilde{X}_{2}))
\end{eqnarray*}
Since,
 \begin{eqnarray*}
g_{B}(\pi_{*}(\nabla_{\tilde{X}}\tilde{X_{i}}), X_{j}) &=& g_{B}(\nabla^{*}_{X}X_{i}, X_{j}) - d\phi(\tilde{X_{j}})g_{B}(X,X_{i})\\&+&\{d\phi(\tilde{X})g_{B}(X_{i},X_{j})+ d\phi(\tilde{X_{i}})g_{B}(X_{j},X)\}
\end{eqnarray*}
where $i,j = 1,2$ and $i \neq j$. 
We get 
\begin{eqnarray*}
(\nabla_{\tilde{X}} g_{M})(\tilde{X}_{1},\tilde{X}_{2}) &=& e^{2\phi}(\nabla^{*}_{X}g_{B})(X_{1},X_{2})
\end{eqnarray*}
Similarly we can prove other equations.
\end{proof}

Now we prove a necessary and sufficient condition for $(M, \nabla, g_M)$ to be a statistical manifold for a conformal submersion with horizontal distriution.

\begin{theorem}
 Let $\pi : (\mathbf{M},g_{M})\longrightarrow (\mathbf{B},g_{B}) $ be a conformal submersion and $\pi: (\mathbf{M},\nabla)\longrightarrow (\mathbf{B},\nabla^{*}) $ be a conformal submersion with horizontal distribution $\mathcal{H}(\mathbf{M})$\\ $= \mathcal{V}^{\perp}(\mathbf{M})$ and let $Tor(\nabla) = 0$. Then $(\mathbf{M},\nabla,g_{M})$ is a statistical manifold if and only if 
\begin{enumerate}
\item $\mathcal{H}(S_{V}X) = A_{X}V - \overline{A}_{X}V$
\item $\mathcal{V}(S_{X}V) = T_{V}X - \overline{T}_{V}X$
\item $(\pi^{-1}(b),\hat{\nabla}^{b},\hat{g}_{M}^{b})$ is a statistical manifold for each $b\in \mathbf{B}$.
\item $(\mathbf{B},\nabla^{*},g_{B})$ is a statistical manifold.
\end{enumerate}
\end{theorem}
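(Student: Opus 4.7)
The plan is to reduce the statistical condition to the symmetry of $\nabla g_M$ as a $(0,3)$-tensor (torsion-freeness is given), and then check this symmetry case by case according to the horizontal/vertical types of the three arguments, using equations (\ref{cs6})--(\ref{cs11}) from the preceding Lemma. Since $g_M$ is symmetric, $(\nabla_E g_M)(F,G)$ is automatically symmetric in $F,G$, so it suffices to establish symmetry between the first two arguments, i.e.\ $(\nabla_E g_M)(F,G) = (\nabla_F g_M)(E,G)$ for all $E,F,G$.

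First I would dispose of the two ``pure'' cases. For $E,F,G$ all horizontal, equation (\ref{cs6}) converts symmetry in $(E,F)$ into $(\nabla^{*}_{X}g_{B})(X_{1},X_{2}) = (\nabla^{*}_{X_{1}}g_{B})(X,X_{2})$, which is condition 4. For $E,F,G$ all vertical, equation (\ref{cs11}) reduces the condition to symmetry of $\hat\nabla \hat g_M$ on each fiber, i.e.\ condition 3.

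Next I would handle the two genuinely mixed cases. Taking $E=\tilde X$ horizontal, $F=V$ vertical, $G=\tilde Y$ horizontal, and equating (\ref{cs8}) with (\ref{cs7}), I obtain
\begin{equation*}
-g_{M}(A_{\tilde X}V,\tilde Y)+g_{M}(\overline{A}_{\tilde X}V,\tilde Y) = -g_{M}(S_{V}\tilde X,\tilde Y).
\end{equation*}
Since $\tilde Y$ is horizontal and $A_{\tilde X}V,\ \overline{A}_{\tilde X}V$ are horizontal, letting $\tilde Y$ range over all horizontal lifts forces $\mathcal{H}(S_{V}X)=A_{X}V-\overline{A}_{X}V$, which is condition 1. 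Analogously, taking $E=\tilde X$ horizontal and $F,G=V,W$ vertical and equating (\ref{cs9}) with (\ref{cs10}) yields $\mathcal{V}(S_{X}V)=T_{V}X-\overline{T}_{V}X$, condition 2.

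The remaining two same-type cases (two horizontal arguments in the first two slots with a vertical third, and the dual situation) need to be seen as consequences rather than new conditions. Here the key observation is that the definition of the dual connection gives, without any extra assumption, the identity $g_{M}(S_{E}F,G)=g_{M}(F,S_{E}G)$, obtained by writing $E g_{M}(F,G)$ twice via $(\nabla,\overline\nabla)$ and subtracting. Combining this self-adjointness of $S$ with conditions 1 and 2 makes the leftover symmetries automatic; the converse direction is then just the reverse of the same bookkeeping. The main obstacle is purely computational: keeping track of which projection ($\mathcal H$ or $\mathcal V$) is relevant in each case and checking that the tensorial identities extend from lifts of base vector fields to arbitrary horizontal vectors by $C^\infty$-linearity.
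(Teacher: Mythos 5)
Your proposal is correct and follows essentially the same route as the paper: reduce to symmetry of the cubic form $\nabla g_M$ and match the type-decomposed cases against equations (\ref{cs6})--(\ref{cs11}), identifying each mixed case with conditions 1--4. Your explicit handling of the two leftover cases (two horizontal slots with a vertical third, and its dual) via the self-adjointness identity $g_M(S_EF,G)=g_M(F,S_EG)$ is a detail the paper leaves implicit in its converse, and it is a worthwhile addition rather than a deviation.
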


\begin{proof}
Suppose $(\mathbf{M},\nabla,g_{M})$ is a statistical manifold, then $\nabla g_M$ is symmetric. Then we have
\begin{eqnarray*}
(\nabla_{V}g_{M})(X,Y) = (\nabla_{X}g_{M})(V,Y).
\end{eqnarray*}
Where $X,Y$ are horizontal vectors and $V$ is a vertical vector. Then from (\ref{cs7}) and (\ref{cs8}) of above lemma we get 
\begin{eqnarray*}
-g_{M}(S_{V}X,Y) &=& -g_{M}(A_{X}V,Y)+ g_{M}(\overline{A}_{X}V,Y)\\
\end{eqnarray*}
This implies, 
\begin{eqnarray*}
\mathcal{H}(S_{V}X) &=& A_{X}V - \overline{A}_{X}V.
\end{eqnarray*}

Similarly from (\ref{cs9}) and (\ref{cs10}) of above lemma we get 
\begin{eqnarray*}
\mathcal{V}(S_{X}V) = T_{V}X - \overline{T}_{V}X
\end{eqnarray*}
Since $\nabla g_M$ is symmetric from (\ref{cs11}) of above lemma we get $\hat{\nabla}^{b}\hat{g}^{b}$ is symmetric, so  $(\pi^{-1}(b),\hat{\nabla}^{b},\hat{g}_{M}^{b})$ is a statistical manifold. 
Also from (\ref{cs6}) of above lemma we get, 
\begin{eqnarray*}
(\nabla_{\tilde{X}} g_{M})(\tilde{X}_{1},\tilde{X}_{2}) &=& e^{2\phi}(\nabla^{*}_{X}g_{B})(X_{1},X_{2}).
\end{eqnarray*}
Where $\tilde{X}_{i}$ are the horizontal lift of vector fields $X_{i}$ on $\mathbf{B}$. Since, $\nabla g_{M}$ is symmetric $\nabla^{*}g_{B}$ is also symmetric. Hence, $(\mathbf{B},\nabla^{*},g_{B})$ is a statistical manifold.\\
 
  Conversely, if all the four conditions hold then from the above lemma we get 
\begin{eqnarray*}
\nabla_{E}g_{M}(F,G) = \nabla_{F}g_{M}(E,G)
\end{eqnarray*}
for any vector fields $E,F$ and $G$ on $\mathbf{M}$. That is, $\nabla g_{M}$ is symmetric on $\mathbf{M}$, so  $(\mathbf{M},\nabla,g_{M})$ is a statistical manifold.

\end{proof}

\subsection{Geodesics}

Let $M$, $B$ be Riemannian manifolds and $\pi :M\rightarrow B$ be a submersion. Let $E$ be a vector field on a curve $\sigma$ in $\mathbf{M}$ and the horizontal part $\mathcal{H}(E)$ and the vertical part $V=\mathcal{V}(E)$ of $E$ be denoted by $H$ and $V$, respectively. $\pi \circ \sigma$ is a curve in $B$ and $E_{*}$ denote the vector field $\pi_{*}(E) = \pi_{*}(H)$ on the curve $\pi \circ \sigma$ in $\mathbf{B}$. $E_{*}^{'}$ denote the covariant derivative of $E_{*}$ and is a vector field on $\pi \circ \sigma$. The horizontal lift to $\sigma$ of $E_{*}^{'}$ is denoted by $\tilde{E_{*}^{'}}$. In \cite{o1967submersions} O'Neil has shown that

$$\mathcal{H}(E^{'}) = \tilde{E_{*}^{'}} + A_H(U) + A_X(V) + T_U(V)$$

$$\mathcal{V}(E^{'}) = A_{X}H+T_{U}H+\mathcal{V}(V^{'}),$$

where $X= \mathcal{H}(\sigma^{'})$ and $U= \mathcal{V}(\sigma^{'})$ and $A$, $T$ are fundamental tensors.\\

Using this O'Neil \cite{o1967submersions} has proved that if $\sigma$ is a geodesic of $\mathbf{M}$, then $\pi \circ \sigma$ is a geodesic of $\mathbf{B}$ if and only if $2A_{X}U+T_{U}U=0$. For affine submersion $\pi : (\mathbf{M}, \nabla) \rightarrow (\mathbf{B},\nabla^{*})$ with horizontal distribution $\mathcal{H}(M)$, Abe and Hasegawa \cite{abe2001affine} proved a similar result.

We consider $\pi :( \mathbf{M}, \nabla, g_{m} )\rightarrow(\mathbf{B},\nabla^{*}, g_{b})$ a conformal submersion with horizontal distribution $\mathcal{H}(\mathbf{M})$. Throughout this section we assume $\nabla$ is torsion free. A curve $\sigma$ is a geodesic if and only if $\mathcal{H}(\sigma ^{''})=0$, where $\sigma^{''}$ is the covariant derivative of $\sigma^{'}$. So first we obtain equations for $\mathcal{H}(E^{'})$ and $\mathcal{V}(E^{'})$ for a vector field $E$ on curve $\sigma$ in $\mathbf{M}$ for a conformal submersion with horizontal distribution.  

\begin{theorem}
Let $ \pi: (\mathbf{M},\nabla,g_{m}) \rightarrow (\mathbf{B},\nabla^{*},g_{b})$ be a conformal submersion with horizontal distribution, and let $E = H + V$ be a vector field on curve $\sigma$ in $\mathbf{M}$. Then for every vector field $Z$ on $\mathbf{B}$ we have
\begin{eqnarray*}
g_{b}(\pi_{*}(\mathcal{H}(E^{'})),Z) &=& g_{b}((E_{*}^{'})+\pi_{*}(A_{X}U+A_{X}V+T_{U}V),Z)-d\phi(\tilde{Z})g_{b}(\pi_{*}X,\pi_{*}H)\\&+&d\phi(X)g_{b}(\pi_{*}H,Z)+d\phi(H)g_{b}(\pi_{*}X,Z).\\
\mathcal{V}(E^{'}) &=& A_{X}H+T_{U}H+\mathcal{V}(V^{'})
\end{eqnarray*}
where $X= \mathcal{H}(\sigma^{'})$ and $U= \mathcal{V}(\sigma^{'})$.
\end{theorem}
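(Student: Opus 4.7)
The plan is to decompose $E = H + V$ and $\sigma' = X + U$ into horizontal and vertical parts, and then expand $E' = \nabla_{\sigma'}E$ bilinearly into the four terms $\nabla_X H$, $\nabla_X V$, $\nabla_U H$, $\nabla_U V$. Each of these splits via the Gauss--Weingarten-type formulas at the end of Section~2: $\nabla_X H = A_X H + \mathcal{H}(\nabla_X H)$, $\nabla_X V = \mathcal{V}(\nabla_X V) + A_X V$, $\nabla_U H = \mathcal{H}(\nabla_U H) + T_U H$, and $\nabla_U V = T_U V + \hat\nabla_U V$. Regrouping the vertical contributions gives $\mathcal{V}(E') = A_X H + T_U H + \mathcal{V}(\nabla_X V) + \hat\nabla_U V$, and applying the same splitting to $V' = \nabla_{\sigma'}V$ shows $\mathcal{V}(V') = \mathcal{V}(\nabla_X V) + \hat\nabla_U V$. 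Hence $\mathcal{V}(E') = A_X H + T_U H + \mathcal{V}(V')$, which is the second claimed identity.

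For the horizontal part, the same regrouping yields $\mathcal{H}(E') = \mathcal{H}(\nabla_X H) + \mathcal{H}(\nabla_U H) + A_X V + T_U V$. To handle the second summand, I would use torsion-freeness to write $\nabla_U H - \nabla_H U = [U,H]$, extend $H$ locally to a basic vector field (justified since the quantity of interest is tensorial at the evaluation point), and invoke $\pi_*[U,H] = [\pi_*U,\pi_*H]\circ\pi = 0$ to conclude that $[U,H]$ is vertical. Then $\mathcal{H}(\nabla_U H) = \mathcal{H}(\nabla_H U) = A_H U$. The three horizontal summands $A_H U$, $A_X V$, and $T_U V$ then contribute the term $g_b(\pi_*(A_H U + A_X V + T_U V), Z)$ after pairing, matching the right-hand side of the first identity (modulo what appears to be a typographical $H\leftrightarrow X$ swap in the statement).

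Finally, I would evaluate $g_b(\pi_*\mathcal{H}(\nabla_X H), Z)$ by invoking the defining identity of a conformal submersion with horizontal distribution with $\tilde{X} = X$ and $\tilde{Y} = H$: this directly produces $g_b(\nabla^*_{\pi_*X}\pi_*H, Z)$ together with the three $d\phi$-correction terms appearing on the right. Since the velocity of the projected curve is $\pi_*\sigma' = \pi_*X$ (as $U$ is vertical) and $E_* = \pi_*E = \pi_*H$, the covariant derivative $E_*'$ along $\pi\circ\sigma$ equals $\nabla^*_{\pi_*X}\pi_*H$, so assembling everything reproduces the first identity.

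The main obstacle will be the treatment of $\mathcal{H}(\nabla_U H)$: one must pass to a basic local extension of $H$ and use $\pi$-relatedness to kill $\mathcal{H}([U,H])$. This is the conformal-submersion counterpart of O'Neill's lemma on brackets of basic and vertical fields, and is the only place in the argument where bilinear bookkeeping combined with the fundamental tensors $A,T$ does not suffice by itself.
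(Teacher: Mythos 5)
Your proposal is correct and follows essentially the same route as the paper: the paper carries out the identical decomposition in an explicit local frame of basic fields $\tilde{B}_{i}$ and vertical fields $F_{m}$, uses torsion-freeness together with $\mathcal{H}[\tilde{B}_{i},F_{m}]=0$ to produce the $A_{H}U$ term (confirming that the $A_{X}U$ in the statement is indeed a slip for $A_{H}U$), and then applies the defining identity of a conformal submersion with horizontal distribution exactly as you do. The one imprecision is your justification for passing to a basic extension of $H$ --- $\mathcal{H}(\nabla_{U}H)$ by itself is not tensorial in $H$, so extension-independence must come from the sum $\nabla_{X}H+\nabla_{U}H$; the paper avoids the issue by writing $H=\sum_{i}a^{i}(t)\,\tilde{B}_{i}\mid_{\sigma}$ so that all extension-dependence is absorbed into the separately recorded $(a^{i})'(t)\,\tilde{B}_{i}$ term.
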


\begin{proof}
We work in a neighbourhood of an arbitrary point $\sigma(t)$ of the curve.  Let $B_{1},....,B_{n} (n= dim \mathbf{B})$ be a base field near $\pi(\sigma(t))$ on $\mathbf{B}$ and $\tilde{B}_{1}, ...,\tilde{B}_{n}$ their horizontal lift. Let $F_{1},...,F_{l}$ be a vertical base field near $\sigma(t)$,where $l$ is the dimension of fibers. Set $H = \sum _{i}a^{i}(\tilde{B_{i}}\mid_{\sigma})$,$V = \sum _{j}b^{j}(F_{j}\mid_{\sigma})$, $X = \sum _{k}c^{k}(\tilde{B_{k}}\mid_{\sigma})$ and $ U= \sum _{m}d^{m}(F_{m}\mid_{\sigma})$, where $a^{i},b^{j}, c^{k}$ are real valued function on a neighbourhood of $t$ and $\tilde{B_{i}}\mid_{\sigma}$ (resp. $F_{m}\mid_{\sigma}$) are vector fields along curve $\sigma$ defined by $(\tilde{B_{i}}\mid_{\sigma})_{s} := (\tilde{B_{i}})_{\sigma(s)}$ (resp.$F_{m}\mid_{\sigma}:= (F_{m})_{\sigma(s)}$). We have

 \begin{eqnarray}\label{cg1}
  \begin{split}
 \mathcal{H}(E^{'})_{t} = &\mathcal{H}\left\{ \sum_{i}a^{i'}(t)(\tilde{B_{i}})_{\sigma(t)}+ \sum_{i}a^{i}(t)(\nabla_{\sigma^{'}(t)}\tilde{B_{i}})_{\sigma(t)}\right.\\ &\left.+ \sum_{j}b^{j'}(t)(F_{j})_{\sigma(t)}+ \sum_{j}b^{j}(t)(\nabla_{\sigma^{'}(t)}F_{j})_{\sigma(t)} \right\} \\
 =&\sum_{i}a^{i'}(t)(\tilde{B_{i}})_{\sigma(t)} + \sum_{i,k}a^{i}(t)c^{k}(t)\mathcal{H}(\nabla_{\tilde{B_{k}}}\tilde{B_{i}})_{\sigma(t)}\\
 &+ \sum_{i,m} a^{i}(t)d^{m}(t)\mathcal{H}(\nabla_{F_{m}}\tilde{B_{i}})_{\sigma(t)}+(A_{X}V)_{\sigma(t)}+ (T_{U}V)_{\sigma(t)}.
 \end{split}
  \end{eqnarray}
 Since $\mathcal{H}[\tilde{B_{i}},F_{m}] = 0$  and $\nabla$ is torsion free we get 
 \begin{eqnarray}\label{cg2}
 \sum_{i,m} a^{i}(t)d^{m}(t)\mathcal{H}(\nabla_{F_{m}}\tilde{B_{i}})_{\sigma(t)} = (A_{H}U)_{\sigma(t)}. 
 \end{eqnarray}
Also we have
\begin{eqnarray}
(E_{*}^{'})_{t} &=& \sum_{i} \lbrace a^{i'}(t)(B_{i})_{\pi(\sigma(t))}+a^{i}(t)(B_{i}\mid_{\pi \circ \sigma})^{'}_{\pi(\sigma(t))} \nonumber\\
&=& \sum_{i}a^{i'}(t)(B_{i})_{\pi(\sigma(t))} + \sum_{i,k} a^{i}(t)c^k(t)(\nabla^{*}_{B_{k}}B_{i})_{\pi(\sigma(t))}\label{cg3}
\end{eqnarray}   

From (\ref{cg1}) and (\ref{cg2}) we get 
\begin{eqnarray}\label{cg4}
\pi_{*}(\mathcal{H}(E^{'})_{t}) &=& \sum_{i} a^{i'}(t)(B_{i})_{\pi(\sigma(t))}+ \sum_{i,k}a^{i}(t)c^{k}(t)\pi_{*}(\mathcal{H}(\nabla_{\tilde{B_{k}}}\tilde{B_{i}}))_{\pi(\sigma(t))} \nonumber\\ 
&+& \pi_{*}((A_{H}U)+(A_{X}V)+(T_{U}V))_{\pi(\sigma(t))}
\end{eqnarray}

Now from (\ref{cg3}), (\ref{cg4}) and definition of conformal submersion with horizontal distribution we get
\begin{eqnarray*}
g_{b}(\pi_{*}(\mathcal{H}(E^{'})),Z) &=& g_{b}((E_{*}^{'})+\pi_{*}(A_{X}U+A_{X}V+T_{U}V),Z)-d\phi(\tilde{Z})g_{b}(\pi_{*}X,\pi_{*}H)\\&+&d\phi(X)g_{b}(\pi_{*}H,Z)+d\phi(H)g_{b}(\pi_{*}X,Z).
\end{eqnarray*}

Similarly we can prove $\mathcal{V}(E^{'}) = A_{X}H+T_{U}H+\mathcal{V}(V^{'}).$
\end{proof}
\vspace{.2cm}
 For $\sigma^{''}$ we have

\begin{corollary}
Let $\sigma$ be a curve in $\mathbf{M}$ with $X = \mathcal{H}(\sigma^{'})$ and $U = \mathcal{V}(\sigma^{'})$. Then 
\begin{eqnarray}
g_{b}(\pi_{*}(\mathcal{H}(\sigma^{''})),Z) &=& g_{b}((\sigma_{*}^{''})+\pi_{*}(2A_{X}U+T_{U}U),Z)-d\phi(\tilde{Z})g_{b}(\pi_{*}X,\pi_{*}X)\nonumber\\&+&2d\phi(X)g_{b}(\pi_{*}X,Z) \label{cg5}\\
\mathcal{V}(\sigma^{''}) &=& A_{X}X+T_{U}X+\mathcal{V}(U^{'}) \label{cg6}
\end{eqnarray}
where $\sigma_{*}^{''}$ denotes the covariant derivative of $(\pi \circ \sigma)^{'}$.
\end{corollary}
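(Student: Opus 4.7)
The plan is to recognize that this corollary is a direct specialization of the preceding theorem applied to the particular choice $E = \sigma^{'}$. The theorem computes $\mathcal{H}(E^{'})$ and $\mathcal{V}(E^{'})$ for any vector field $E = H + V$ on the curve $\sigma$, decomposed along the horizontal lift $X = \mathcal{H}(\sigma^{'})$ and vertical part $U = \mathcal{V}(\sigma^{'})$ of the velocity. Substituting $E = \sigma^{'}$ forces $H = X$ and $V = U$, and the covariant derivative $E^{'}$ is by definition $\sigma^{''}$; similarly $E_{*}^{'}$ becomes the covariant derivative of $(\pi \circ \sigma)^{'}$, which is precisely $\sigma_{*}^{''}$.

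With these identifications the combination $A_{X}U + A_{X}V + T_{U}V$ collapses to $A_{X}U + A_{X}U + T_{U}U = 2A_{X}U + T_{U}U$, giving the principal transport term of \eqref{cg5}. For the conformal correction terms in the theorem, $g_{b}(\pi_{*}X, \pi_{*}H) = g_{b}(\pi_{*}X, \pi_{*}X)$, while $d\phi(X)g_{b}(\pi_{*}H, Z) + d\phi(H)g_{b}(\pi_{*}X, Z)$ merges into $2d\phi(X)g_{b}(\pi_{*}X, Z)$ because $\pi_{*}H = \pi_{*}X$ and $d\phi(H) = d\phi(X)$ once $H = X$. Assembling these substitutions yields \eqref{cg5} verbatim.

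For the vertical equation, the specialization $H = X$, $V = U$ turns the theorem's formula $\mathcal{V}(E^{'}) = A_{X}H + T_{U}H + \mathcal{V}(V^{'})$ directly into $\mathcal{V}(\sigma^{''}) = A_{X}X + T_{U}X + \mathcal{V}(U^{'})$, which is \eqref{cg6}. There is no genuine analytic obstacle here; the corollary is a relabeling of indices in the theorem, so the only thing worth pointing out explicitly is the identification $E_{*}^{'} = \sigma_{*}^{''}$ and the clean bookkeeping of the $d\phi$ terms, which is the only place a reader might double-check a factor of two.
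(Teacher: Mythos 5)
Your proposal is correct and matches the paper's (implicit) argument: the corollary is stated without proof precisely because it is the theorem specialized to $E=\sigma^{'}$, so $H=X$, $V=U$, $E^{'}=\sigma^{''}$, $E_{*}^{'}=\sigma_{*}^{''}$, and the terms collapse exactly as you describe. The bookkeeping of the factor of two in $2A_{X}U+T_{U}U$ and in $2d\phi(X)g_{b}(\pi_{*}X,Z)$ is the only nontrivial point, and you handle it correctly.
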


Now for a conformal submersion with horizontal distribution we prove a necessary and sufficient condition for $\pi \circ \sigma$ to become a geodesic of $\mathbf{B}$, if $\sigma$ is a geodesic of $\mathbf{M}$.

 \begin{theorem}
 Let $ \pi: (\mathbf{M},\nabla,g_{m}) \rightarrow (\mathbf{B},\nabla^{*},g_{b})$ be a conformal submersion with horizontal distribution. If $\sigma$ is a geodesic on $\mathbf{M}$, then $\pi \circ \sigma$ is a geodesic on $\mathbf{B}$ if and only if 
 \begin{eqnarray*}
 g_{b}(\pi_{*}(2A_{X}U+T_{U}U),Z)+2d\phi(X)g_{b}(\pi_{*}X,Z) = d\phi(\tilde{Z})\parallel \pi_{*}X \parallel^{2}
 \end{eqnarray*}
 for every vector field $Z$ on $\mathbf{B}$, where $X = \mathcal{H}(\sigma^{'})$ and $U = \mathcal{V}(\sigma^{'})$ and $\parallel \pi_{*}X \parallel^{2} = g_{b}(\pi_{*}X,\pi_{*}X) $
 \end{theorem}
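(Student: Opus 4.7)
The plan is to read the condition for $\pi\circ\sigma$ to be a geodesic directly off the horizontal part of $\sigma''$ as computed in the preceding Corollary, using the fact that $\sigma$ being a geodesic of $\mathbf{M}$ forces $\mathcal{H}(\sigma'')=0$. So this is essentially an ``unpacking'' argument rather than a new computation; the Corollary does the real work.

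First, I would recall that $\sigma$ is a geodesic of $(\mathbf{M},\nabla)$ iff $\sigma''=0$, which implies in particular $\mathcal{H}(\sigma'')=0$, hence $\pi_{*}(\mathcal{H}(\sigma''))=0$. Substituting this into equation (\ref{cg5}) of the Corollary yields, for every vector field $Z$ on $\mathbf{B}$,
\begin{eqnarray*}
0 &=& g_{b}(\sigma_{*}^{''},Z)+g_{b}(\pi_{*}(2A_{X}U+T_{U}U),Z)\\
  & & -\,d\phi(\tilde{Z})\|\pi_{*}X\|^{2}+2d\phi(X)g_{b}(\pi_{*}X,Z).
\end{eqnarray*}

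Next, I would observe that $\pi\circ\sigma$ is a geodesic of $(\mathbf{B},\nabla^{*})$ iff $\sigma_{*}^{''}=0$, i.e.\ iff $g_{b}(\sigma_{*}^{''},Z)=0$ for all vector fields $Z$ on $\mathbf{B}$ (non-degeneracy of $g_{b}$). From the displayed identity above, this vanishing is equivalent to
\begin{eqnarray*}
g_{b}(\pi_{*}(2A_{X}U+T_{U}U),Z)+2d\phi(X)g_{b}(\pi_{*}X,Z) = d\phi(\tilde{Z})\|\pi_{*}X\|^{2},
\end{eqnarray*}
for every vector field $Z$ on $\mathbf{B}$, which is exactly the stated condition.

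There is no real obstacle beyond noting that $\sigma''=0$ trivially implies $\mathcal{H}(\sigma'')=0$ and then applying the nondegeneracy of $g_b$; the conformal correction terms $d\phi(\tilde{Z})\|\pi_{*}X\|^{2}$ and $2d\phi(X)g_{b}(\pi_{*}X,Z)$, which vanish in the affine case ($\phi$ constant) and recover Abe--Hasegawa's criterion $2A_{X}U+T_{U}U=0$, come for free from the Corollary. The only small subtlety is that the condition is stated for an arbitrary vector field $Z$ on $\mathbf{B}$ (with horizontal lift $\tilde{Z}$), so one should mention that it suffices to test against a local basis, after which nondegeneracy of $g_b$ gives the ``only if'' direction.
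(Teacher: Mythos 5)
Your proposal is correct and follows essentially the same route as the paper: substitute $\mathcal{H}(\sigma'')=0$ into equation (\ref{cg5}) of the Corollary, isolate $g_{b}(\sigma_{*}'',Z)$, and conclude by nondegeneracy of $g_{b}$ that its vanishing for all $Z$ is equivalent to the stated condition. The only cosmetic difference is that you spell out the nondegeneracy step and the reduction to the affine case, which the paper leaves implicit.
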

 
\begin{proof}
Since $\sigma$ is a geodesic on $\mathbf{M}$ from (\ref{cg5}) we get
\begin{eqnarray*}
 g_{b}((\sigma_{*}^{''}), Z) = d\phi(\tilde{Z})\parallel \pi_{*}X \parallel^{2} - g_{b}(\pi_{*}(2A_{X}U+T_{U}U),Z) - 2d\phi(X)g_{b}(\pi_{*}X,Z)
\end{eqnarray*}
for every vector field $Z$ on $\mathbf{B}$. Hence, $\pi \circ \sigma$ is a geodesic on $\mathbf{B}$ if and only if 
 \begin{eqnarray*}
 g_{b}(\pi_{*}(2A_{X}U+T_{U}U),Z)+2d\phi(X)g_{b}(\pi_{*}X,Z) = d\phi(\tilde{Z})\parallel \pi_{*}X \parallel^{2}
 \end{eqnarray*}
 for every vector field $Z$ on $\mathbf{B}$.  
\end{proof}
\begin{remark}
If $\sigma$ is a horzontal geodesic (that is, $\mathcal{V}(\sigma^{'})=0$), then $\pi \circ \sigma$ is a geodesic. Also from (\ref{cg6}), horizontal lift of a geodesic of $\mathbf{B}$ is a geodesic of $\mathbf{M}$ if $A_{Z}Z = 0$ for all horizontal vector fields $Z$ on $\mathbf{M}$.
\end{remark}
\section{Statistical structures on the tangent bundle}

Theory of horizontal lift and complete lift of tensor fields and connections on the tangent bundle are well known\cite{yano1973tangent}. 
Mastuzoe and Inoguchi \cite{matsuzoe2003statistical} obtained necessary and sufficient condition for the tangent bundle $T\mathbf{M}$ to become a statistical manifold with respect to the Sasaki lift metric and the horizontal lift connection and also with respect to the horizontal lift metric and the horizontal lift connection.  In this section, first we show that the submersion $\pi : (T\mathbf{M}, \nabla^c) \rightarrow (\mathbf{M}, \nabla )$ is an affine submersion with horizontal distribution and $\pi : (T\mathbf{M}, g^{s}) \rightarrow (\mathbf{M}, g)$ is a semi-Riemannian submersion. Also prove a necessary and sufficient condition for $T\mathbf{M}$ to become a statistical manifold with respect to Sasaki lift metric and complete lift connection for an affine submersion with horizontal distribution.
 
Let $\mathbf{M}$ be an $n$-dimensional manifold and $T\mathbf{M} = \amalg_{x\in \mathbf{M}}T_{x}\mathbf{M}$ denote the tangent bundle on $\mathbf{M}$. Let $\pi: T\mathbf{M} \longrightarrow \mathbf{M}$ be the natural projection defined by $X_{x}\in T_{x}\mathbf{M}\longrightarrow x \in \mathbf{M}$. Let $(U; x^{1},...,x^{n})$ be a local coordinate system on $\mathbf{M}$ and the induced co-ordinate system on $\pi^{-1}(U)$ be $(x^{1},..x^{n}; u^{1},..u^{n})$. Let $(x;u)$ be a point on $T\mathbf{M}$,  denote the kernel of $\pi_{*(x;u)}$ by $\mathcal{V}_{(x;u)}$ called the vertical subspace of $T_{(x;u)}(T\mathbf{M})$ at $(x;u)$. Note that the vertical subspace $\mathcal{V}_{(x;u)}$ is spanned by $\lbrace \frac{\partial}{\partial u^{1}},\frac{\partial}{\partial u^{2}},...\frac{\partial}{\partial u^{n}}\rbrace$. The two linear spaces $T_{x}\mathbf{M}$ and $\mathcal{V}_{(x;u)}$ have same dimension, so there is a canonical linear isomorphism $ V:T_{x}\mathbf{M} \longrightarrow \mathcal{V}_{(x;u)}$ called the vertical lift.

Let $f:\mathbf{M}\longrightarrow \mathbf{R} $ be a smooth function on
 $\mathbf{M}$ and $\pi: T\mathbf{M} \longrightarrow \mathbf{M}$ be the natural projection. The vertical lift of $f$ is denoted by  $f^{v}$ and defined as $  f^{v} = f \circ \pi $. For a vector field $X = X^{i}\frac{\partial}{\partial x^{i}}$ on $\mathbf{M}$ the vertical lift is denoted by $X^{v}$ and defined as $X^{v} = (X^{i})^{v}\frac{\partial}{\partial u^{i}} $. Note that for any vector fields $X,Y$ on $\mathbf{M}$  $[X^{v},Y^{v}] = 0$. The vertical lift of $df$ is defined by $(df)^{v} = d(f^{v})$, in particular for local co-ordinate functions $x^{i}$, $(dx^{i})^{v} = d(x^{i})^{v}$. The vertical lift of $1$-form $\omega = \omega_{i}dx^{i}$ is defined as $\omega^{v} = (\omega_{i})^{v}d(x^{i})^{v}$, the vertical lift operation extends on the full tensor algebra $\mathcal{T}(\mathbf{M})$ by the rule $(P\otimes Q)^{v} = P^{v}\otimes Q^{v}$ 
for any tensor fields $P$ and $Q$ on $\mathbf{M}$.

Let $f: \mathbf{M}\longrightarrow \mathbf{R}$ be a smooth map, the complete lift $f^{c}$ of $f$ on $T\mathbf{M}$ is defined as $f^{c}= i(df) = u^{i}\frac{\partial f}{\partial x^{i}}$. The complete lift $X^{c}$  on $T\mathbf{M}$ of the vector field $X$ on $\mathbf{M}$ is characterized by the formula $X^{c}(f^{c}) = (Xf)^{c}$ for all $f \in C^{\infty}(\mathbf{M})$. In local co-ordinate, the complete lift $X^{c}$ of $X = X^{i}\frac{\partial}{\partial x^{i}}$ has the local expression
\begin{eqnarray*}
X^{c} &=& (X^{i})\frac{\partial}{\partial x^{i}}+ u^{j}\frac{\partial X^{i}}{\partial x^{j}} \frac{\partial}{\partial u^{i}}.
\end{eqnarray*}

The complete lift to $1-$from $\omega$ is defined as 
\begin{eqnarray*}
\omega^{c}(X^{c}) &=& (\omega(X))^{c}.
\end{eqnarray*}
More generally the complete lift to full tensor algebra  $\mathcal{T}(\mathbf{M})$ is given by the rule 
\begin{eqnarray*}
(P\otimes Q)^{c} = P^{c}\otimes Q^{v} + P^{v}\otimes Q^{c}
\end{eqnarray*}
for any tensor fields $P$ and $Q$ on $\mathbf{M}$. Let $\nabla$ be a linear connection on $\mathbf{M}$, then the complete lift $\nabla^{c}$ on $T\mathbf{M}$ is defined as $\nabla^{c}_{X^{c}}Y^{c} = (\nabla_{X}Y)^{c}$ for every $X,Y \in \mathcal{X}(\mathbf{M})$.

\begin{remark}
Matsuzoe and Inoguchi \cite{matsuzoe2003statistical} have proved that if $(\mathbf{M},\nabla,g)$ is a statistical manifold, then  $(T\mathbf{M},\nabla^{c},g^{c})$ is a statistical manifold. Moreover the conjugate connection of $\nabla^{c}$ is $\overline{(\nabla^{c})} = (\overline{\nabla})^{c}$.
\end{remark}

Now we look at the horizontal lifts on the tangent bundle. Let $\mathbf{M}$ be a smooth $n-$dimensional manifold and $\nabla$ be a torsion free linear connection on $\mathbf{M}$. The vertical subspace $\mathcal{V}_{(x;u)}$ of $T_{(x;u)}(T\mathbf{M})$ at $(x;u)$ defines a smooth distribution $\mathcal{V}$ on $T\mathbf{M}$ called the vertical distribution. Also there exists a smooth distribution $x \longrightarrow \mathcal{H}(T\mathbf{M})_{x}$ depending on the linear connection $\nabla$ such that 
 \begin{eqnarray*}
 T_{(x;u)}(T\mathbf{M}) = \mathcal{H}(T\mathbf{M})_{x}\oplus \mathcal{V}_{(x;u)}.
 \end{eqnarray*}
This distribution is denoted by $\mathcal{H}_{\nabla}$, called the horizontal distribution. Let $X$ be a vector field on $\mathbf{M}$, then the horizontal lift of $X$ on $T\mathbf{M}$ is the unique vector field $X^{H}$ on $T\mathbf{M}$ such that $\pi_{*}(X^{H}_{(x;u)})= X_{\pi((x;u))}$ for all $(x;u) \in T\mathbf{M}$. In local co-ordinates if $X = X^{i}\frac{\partial}{\partial x^{i}}$, then 
 \begin{eqnarray*}
 X^{H} = X^{i}\frac{\partial}{\partial x^{i}} - X^{j}u^{k}\Gamma_{j,k}^{i}\frac{\partial}{\partial u_{i}}.
 \end{eqnarray*}
Here $\Gamma_{j,k}^{i}$is the connection coefficient of $\nabla$.\par
 Let $g$ be a semi-Riemannian metric on $\mathbf{M}$, then the horizontal lift $g^H$ on $\mathbf{M}$ is defined as  $g^{H}(X^{H}, Y^{H})= g^{H}(X^{v},Y^{v}) = 0$ and $g^{H}(X^{H},Y^{v}) = g(X,Y)$, for $X,Y \in \mathcal{X}(\mathbf{M})$. The horizontal lift $\nabla^H$ on $\mathbf{M}$ of linear connection $\nabla$ on $\mathbf{M}$ is defined as $\nabla^{H}_{X^{v}}Y^{v} = 0$, $\nabla^{H}_{X^{v}}Y^{H} = 0$, $\nabla^{H}_{X^{H}}Y^{v} = (\nabla _{X} Y)^{v}$, $\nabla^{H}_{X^{H}}Y^{H} = (\nabla _{X}Y)^{H}$, for $X,Y \in \mathcal{X}(\mathbf{M})$. Note that even if $\nabla$ is torsion free, its horizontal lift $\nabla^{H}$ may have non-trivial torsion.

Let $g$ be a semi-Riemannian metric on $(\mathbf{M},\nabla)$. We define a semi-Riemannian  metric $g^{s}$ on $T\mathbf{M}$ as,  $g^{s}_{(x;u)}(X^{H},Y^{H}) = g_{x}(X,Y)$, $g^{s}_{(x;u)}(X^{H},Y^{v}) = 0$,\\ $g^{s}_{(x;u)}(X^{v},Y^{v}) = g_{x}(X,Y)$. The metric $g^{s}$ is called the Sasaki lift metric.

In \cite{yano1973tangent} Yano and Ishihara introduced $\gamma$ operator for defining horizontal lift from complete lift. Let $X$ be a vector field on $\mathbf{M}$, with local expression $X = X^{i}\frac{\partial}{\partial x^{i}}$, $\nabla X = X_{j}^{i}\frac{\partial}{\partial x_{i}}\otimes dx^{j}$, where $X_{j}^{i}= \frac{\partial X^{i}}{\partial x^{j}}+ X^{k}\Gamma_{j,k}^{i}$.  Define $\gamma(\nabla X) = u^{j}X^{i}_{j}\frac{\partial}{\partial u_{i}}$  with respect to the induced co-ordinate $(x^{1},...,x^{n}; u^{1},...,u^{n})$. Then we can see that $X^{H} = X^{c} - \gamma(\nabla X)$, note that $\gamma(\nabla X)$ is the vertical part of $X^{c}$. 

\begin{remark}
Matsuzoe and Inoguchi \cite{matsuzoe2003statistical} proved that if $(\mathbf{M},\nabla, g)$ is a statistical manifold, then $(T\mathbf{M},  \nabla^{H},g^{s})$ or $(T\mathbf{M},\nabla^{H},g^{H})$ is a statistical manifold if and only if $\nabla g = 0$. Also they obtained that for a statistical manifold $(\mathbf{M},\nabla,g)$ both $(T\mathbf{M}, g^{s},C^{H})$ and $(T\mathbf{M}, g^{H},C^{H})$ are  statistical manifolds, where $C^{H}$ is the horizontal lift of the cubic form $C = \nabla g$.
\end{remark}


Consider the submersion $\pi: T\mathbf{M}\longrightarrow \mathbf{M}$. Let $\nabla$ be an affine connection on $\mathbf{M}$. Then there is a horizontal distribution $\mathcal{H}_{\nabla}$ such that 
\begin{eqnarray*}
T_{(x;u)}(T\mathbf{M}) = \mathcal{H}_{(x;u)}(T\mathbf{M}) + \mathcal{V}_{u}
\end{eqnarray*}
for every $(x;u)\in T\mathbf{M}$.\par Now we show that the submersion $\pi$ of $T\mathbf{M}$  into $\mathbf{M}$ with complete lift of affine connection is an affine submersion with horizontal distribution.
 
\begin{proposition}\label{csp1}
The submersion $\pi : (T\mathbf{M}, \nabla^{c}) \longrightarrow (\mathbf{M},\nabla)$ is an affine submersion with horizontal distribution.
\end{proposition}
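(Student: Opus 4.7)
The plan is to verify the two conditions in the definition of affine submersion with horizontal distribution. The first---that $\pi:T\mathbf{M}\to\mathbf{M}$ is a submersion equipped with a smooth horizontal distribution complementary to $\mathcal{V}$---is already built into the setup reviewed above: the connection $\nabla$ on $\mathbf{M}$ singles out the horizontal distribution $\mathcal{H}_{\nabla}$, and the horizontal lift of $X\in\mathcal{X}(\mathbf{M})$ is $X^{H}$. What remains is the compatibility condition
$$\mathcal{H}(\nabla^{c}_{X^{H}}Y^{H})=(\nabla_{X}Y)^{H}\qquad\text{for all }X,Y\in\mathcal{X}(\mathbf{M}).$$
Since $\mathcal{H}(E)$ is characterized as the unique horizontal vector with $\pi_{*}$-image equal to $\pi_{*}(E)$, this is equivalent to the pointwise projection identity $\pi_{*}(\nabla^{c}_{X^{H}}Y^{H})=\nabla_{X}Y$, and it is this projection identity that I propose to establish.

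My computational lever will be the relation $X^{H}=X^{c}-\gamma(\nabla X)$ recalled in the excerpt, in which $\gamma(\nabla X)$ is vertical. Expanding by bilinearity,
\begin{align*}
\nabla^{c}_{X^{H}}Y^{H} &= \nabla^{c}_{X^{c}}Y^{c}-\nabla^{c}_{X^{c}}\gamma(\nabla Y)\\
&\quad -\nabla^{c}_{\gamma(\nabla X)}Y^{c}+\nabla^{c}_{\gamma(\nabla X)}\gamma(\nabla Y).
\end{align*}
The defining property of $\nabla^{c}$ gives $\nabla^{c}_{X^{c}}Y^{c}=(\nabla_{X}Y)^{c}$, and under the decomposition $(\nabla_{X}Y)^{c}=(\nabla_{X}Y)^{H}+\gamma(\nabla(\nabla_{X}Y))$ its $\pi_{*}$-image is exactly $\nabla_{X}Y$. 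For each of the other three terms I will write $\gamma(\nabla X)=f^{i}(\partial_{i})^{v}$ locally with $f^{i}=u^{j}X^{i}_{j}\in C^{\infty}(T\mathbf{M})$, and then use the standard identities from \cite{yano1973tangent}, namely $\nabla^{c}_{X^{c}}Y^{v}=(\nabla_{X}Y)^{v}$, $\nabla^{c}_{X^{v}}Y^{c}=(\nabla_{X}Y)^{v}$, and $\nabla^{c}_{X^{v}}Y^{v}=0$, together with the Leibniz rule, to conclude that each of these three summands lies in $\mathcal{V}$. They are therefore killed by $\pi_{*}$, and the projection identity follows.

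The main technical obstacle is that $\gamma(\nabla X)$ is vertical but is \emph{not} the vertical lift of any single vector field on $\mathbf{M}$---its components depend on the fibre coordinate $u$. The remedy, implicit in the step above, is that every vertical vector field on $T\mathbf{M}$ is locally a $C^{\infty}(T\mathbf{M})$-combination of the coordinate vertical lifts $(\partial_{i})^{v}$, and on such lifts $\nabla^{c}$ always produces vertical output; the extra scalar factors $f^{i}\in C^{\infty}(T\mathbf{M})$ contribute only further vertical terms via the Leibniz rule. Once this reduction is in hand the calculation is mechanical, yielding $\mathcal{H}(\nabla^{c}_{X^{H}}Y^{H})=(\nabla_{X}Y)^{H}$ and hence the proposition.
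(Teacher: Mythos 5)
Your proposal is correct and follows essentially the same route as the paper: expand $\nabla^{c}_{X^{H}}Y^{H}$ via $X^{H}=X^{c}-\gamma(\nabla X)$, use $\nabla^{c}_{X^{c}}Y^{c}=(\nabla_{X}Y)^{c}=(\nabla_{X}Y)^{H}+\gamma(\nabla(\nabla_{X}Y))$, and discard the remaining terms as vertical. Your handling of the correction terms is in fact slightly more careful than the paper's: since $\gamma(\nabla X)$ is not the vertical lift of any vector field on $\mathbf{M}$, the term $\nabla^{c}_{\gamma(\nabla X)}\gamma(\nabla Y)$ need not vanish (the paper drops it by citing $\nabla^{c}_{X^{v}}Y^{v}=0$), but, as your local computation with $\gamma(\nabla X)=f^{i}(\partial_{i})^{v}$ and the Leibniz rule shows, it is vertical, which is all the argument requires.
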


\begin{proof}
We need to show that 
\begin{eqnarray*}
\mathcal{H}(\nabla^{c}_{X^{H}}Y^{H}) = (\nabla_{X}Y)^{H}
\end{eqnarray*}
Consider, $X^{H}= X^{c}-\gamma(\nabla X)$, then  
\begin{eqnarray*}
\nabla^{c}_{X^{H}}Y^{H} &=& \nabla^{c}_{X^{c}-\gamma(\nabla X)} Y^{c}-\gamma(\nabla Y)\\
&=& \nabla^{c}_{X^{c}-\gamma(\nabla X)} Y^{c} -\nabla^{c}_{X^{c}-\gamma(\nabla X)} \gamma(\nabla Y)\\
&=& \nabla^{c}_{X^{c}}Y^{c} - \nabla^{c}_{\gamma(\nabla X)}Y^{c}-  \nabla^{c}_{X^{c}}\gamma(\nabla Y)+\nabla^{c}_{\gamma(\nabla X)}\gamma(\nabla Y)
\end{eqnarray*}
Using $\nabla^{c}_{X^{v}}Y^{v} = 0$ (\cite{matsuzoe2003statistical}) we have 
\begin{eqnarray}\label{cs1}
\nabla^{c}_{X^{H}}Y^{H} &=& (\nabla_{X}Y)^{c} - [\nabla^{c}_{\gamma (\nabla X)}Y^{c} + \nabla^{c}_{X^{c}}\gamma(\nabla Y)]
\end{eqnarray}
By definition 
\begin{eqnarray}\label{cs2}
(\nabla_{X}Y)^{c} = (\nabla_{X}Y)^{H}+ \gamma(\nabla(\nabla_{X}Y))
\end{eqnarray}
From $(\ref{cs1})$ and $(\ref{cs2})$  
\begin{eqnarray*}
\mathcal{H}(\nabla^{c}_{X^{H}}Y^{H}) = (\nabla_{X}Y)^{H}.
\end{eqnarray*}
Hence the submersion $\pi : (T\mathbf{M}, \nabla^{c}) \longrightarrow (\mathbf{M},\nabla)$ is an affine submersion with horizontal distribution.
\end{proof}

\begin{proposition}\label{csp2}
The submersion $\pi: (T\mathbf{M}, g^{s})\longrightarrow (\mathbf{M},g)$ is a semi-Riemannian submersion.
\end{proposition}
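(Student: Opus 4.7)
The plan is to verify directly the two defining conditions of a semi-Riemannian submersion given in Definition~3, namely that every fiber is a semi-Riemannian submanifold of $(T\mathbf{M}, g^s)$ and that $\pi_*$ preserves the length of horizontal vectors. Both facts follow almost immediately from the definition of the Sasaki lift, so no real obstacle is expected; the work is bookkeeping in the correct tangent spaces.

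First I would identify the fibers: for each $x \in \mathbf{M}$, $\pi^{-1}(x) = T_x\mathbf{M}$, and its tangent space at a point $(x;u)$ is precisely the vertical subspace $\mathcal{V}_{(x;u)}$, which consists of vertical lifts $X^v$ of vectors $X \in T_x\mathbf{M}$. By the definition of $g^s$, one has
\[
g^s_{(x;u)}(X^v, Y^v) = g_x(X, Y)
\]
for all $X, Y \in T_x\mathbf{M}$. Since the vertical lift $V : T_x\mathbf{M} \to \mathcal{V}_{(x;u)}$ is a linear isomorphism and $g_x$ is non-degenerate on $T_x\mathbf{M}$, the restriction of $g^s$ to $\mathcal{V}_{(x;u)}$ is non-degenerate. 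Hence each fiber $\pi^{-1}(x)$ is a semi-Riemannian submanifold of $(T\mathbf{M}, g^s)$.

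Second, I would check that $\pi_*$ is an isometry on horizontal vectors. Any horizontal vector at $(x;u)$ is of the form $X^H_{(x;u)}$ for some $X \in T_x\mathbf{M}$, and by the defining property of the horizontal lift we have $\pi_*(X^H_{(x;u)}) = X_x$. Therefore, using $g^s_{(x;u)}(X^H, Y^H) = g_x(X, Y)$,
\[
g^s_{(x;u)}(X^H, Y^H) = g_x(X, Y) = g_x\bigl(\pi_*(X^H_{(x;u)}), \pi_*(Y^H_{(x;u)})\bigr),
\]
so $\pi_*$ preserves the full inner product, and in particular the length, of horizontal vectors. Combining both properties, $\pi : (T\mathbf{M}, g^s) \longrightarrow (\mathbf{M}, g)$ is a semi-Riemannian submersion, completing the proof.
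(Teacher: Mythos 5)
Your proof is correct and follows the same route as the paper: identify the fibers as $T_x\mathbf{M}$ with tangent space the vertical subspace, use $g^s(X^v,Y^v)=g(X,Y)$ to see the induced metric is non-degenerate, and use $g^s(X^H,Y^H)=g(X,Y)$ together with $\pi_*(X^H)=X$ to see that $\pi_*$ preserves the inner product on horizontal vectors. The paper's version is terser (it asserts the fiber condition as ``clear''), so your write-up simply supplies the details the paper omits.
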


\begin{proof}
Clearly $\pi^{-1}(p) = T_{p}\mathbf{M}$ for all $p\in \mathbf{M}$ is a semi-Riemannian submanifold of $T\mathbf{M}$ and by definition of $g^{s}$ we have 
\begin{eqnarray*}
g^{s}(X^{H},Y^{H}) = g(X,Y).
\end{eqnarray*}
Hence $\pi$ is a semi-Riemannian submersion.
\end{proof}

Now we give a necessary and sufficient condition for the tangent bundle to be a statistical manifold with the Sasaki lift metric and the complete lift connection.

\begin{theorem}
$(T\mathbf{M},\nabla^{c},g^{s})$ is a statistical manifold if and only if 
\begin{enumerate}
\item $\mathcal{H}(S_{V}X) = A_{X}V - \overline{A}_{X}V$
\item $\mathcal{V}(S_{X}V) = T_{V}X - \overline{T}_{V}X$
\item $(T_{p}\mathbf{M},\hat{\nabla}^{c},\hat{g}^{s})$ is a statistical manifold for each $p\in \mathbf{M}$.
\item $(\mathbf{M},\nabla,g)$ is a statistical manifold.
\end{enumerate}
\end{theorem}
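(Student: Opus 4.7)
The plan is to deduce this theorem as a direct consequence of the characterization of statistical manifolds for conformal submersions with horizontal distribution proved in Section 3, specialized to the affine case via Propositions \ref{csp1} and \ref{csp2}.

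First, by Proposition \ref{csp1}, the canonical projection $\pi : (T\mathbf{M}, \nabla^c) \to (\mathbf{M}, \nabla)$ is an affine submersion with horizontal distribution $\mathcal{H}_\nabla$, and by Proposition \ref{csp2}, $\pi : (T\mathbf{M}, g^s) \to (\mathbf{M}, g)$ is a semi-Riemannian submersion. Since an affine submersion with horizontal distribution is a conformal submersion with horizontal distribution for which $\phi$ is constant, the Section 3 criterion applies once its hypothesis $\mathcal{H}(T\mathbf{M}) = \mathcal{V}(T\mathbf{M})^\perp$ is verified. This identification follows immediately from the definition of the Sasaki lift metric: because $g^s(X^H, Y^v) = 0$ for all $X, Y \in \mathcal{X}(\mathbf{M})$, the connection-induced horizontal distribution $\mathcal{H}_\nabla$ coincides with the $g^s$-orthogonal complement of $\mathcal{V}$.

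Once this matching of horizontal distributions is in hand, the four conditions of the Section 3 theorem translate verbatim into the four conditions stated here. The induced base connection $\nabla'$ equals $\nabla$ by the defining property of an affine submersion, the induced base metric $\tilde{g}$ equals $g$ since $g^s(X^H, Y^H) = g(X, Y)$, and the fiberwise induced connection and metric are precisely $(\hat{\nabla}^c, \hat{g}^s)$ appearing in condition 3. The torsion-freeness of $\nabla^c$ required by the Section 3 theorem is available in both directions of the equivalence: in the forward direction it is part of the definition of statistical manifold for $(T\mathbf{M}, \nabla^c, g^s)$, while in the backward direction it is supplied by condition 4, which ensures $\nabla$ and hence $\nabla^c$ is torsion-free.

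The argument is thus essentially a specialization and bookkeeping exercise; the only genuine step, which I expect to be the main (though mild) obstacle, is checking that the two a priori distinct horizontal structures on $T\mathbf{M}$, namely $\mathcal{H}_\nabla$ coming from the connection and $\mathcal{V}^\perp$ coming from $g^s$, agree, so that the fundamental tensors $S, A, T, \overline{A}, \overline{T}$ in the statement can be interpreted simultaneously with respect to $\nabla^c$ and $g^s$. With this agreement established, invocation of the Section 3 theorem immediately yields the desired equivalence.
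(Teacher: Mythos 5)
Your proposal is correct and, at bottom, follows the same route as the paper: the paper's own proof likewise rests on Propositions \ref{csp1} and \ref{csp2}, takes $\mathcal{H}_{\nabla} = \mathcal{V}^{\perp}$ from $g^{s}(X^{H},Y^{v})=0$, and then runs the symmetry argument on the same family of covariant-derivative identities. The only substantive difference is one of packaging: the paper re-derives the six identities (the analogues of the Section 3 lemma with the factor $e^{2\phi}$ replaced by $1$) directly for $(\nabla^{c},g^{s})$ and repeats the implication-chasing, whereas you invoke the Section 3 theorem as a black box in its $d\phi=0$ specialization after checking that the two horizontal structures agree. Your route is arguably cleaner, and it is also more careful on one point the paper leaves implicit: in the ``if'' direction one needs $\nabla^{c}$ to be torsion free before calling $(T\mathbf{M},\nabla^{c},g^{s})$ a statistical manifold, and you correctly extract this from condition 4 via the standard fact that the complete lift of a torsion-free connection is torsion free.
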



\begin{proof}
From propositions (\ref{csp1}) and (\ref{csp2}) we get that $\pi: (T\mathbf{M},\nabla^{c}, g^{s})\longrightarrow (\mathbf{M}, \nabla,g)$ is an affine submersion with horizontal distribution. Since $g^{s}(X^{H},Y^{V}) = 0$, we can take $\mathcal{H}_{\nabla}(\mathbf{M}) = \mathcal{V}(\mathbf{M})^{\perp}$. First we show that the following equations holds for horizontal vectors $X,Y$ and vertical vectors $U,V,W$
\begin{eqnarray}
\label{cst1}(\nabla^{c}_{V}g^{s})(X,Y) &=& -g^{s}(S_{V}X,Y)\\
\label{cst2}(\nabla^{c}_{X}g^{s})(V,Y) &=& -g^{s}(A_{X}V,Y)+ g^{s}(\overline{A}_{X}V,Y)\\
\label{cst3}(\nabla^{c}_{X}g^{s})(V,W) &=& -g^{s}(S_{X}V,W)\\
\label{cst4}(\nabla^{c}_{V}g^{s})(X,W) &=& -g^{s}(T_{V}X,W)+ g^{s}(\overline{T}_{V}X,W)\\
\label{cst5}(\nabla^{c}_{U}g^{s})(V,W) &=& (\hat{\nabla}^{c}_{U}\hat{g}^{s})(V,W)\\
\label{cst6}(\nabla^{c}_{\tilde{X}} g^{s})(\tilde{X}_{1},\tilde{X}_{2}) &=& (\nabla_{X}g)(X_{1},X_{2}) 
\end{eqnarray}
where $\tilde{X}_{i}$ are the horizontal lift of vector fields $X_{i}$ on $\mathbf{M}$, $\hat{g}^{s}$ is the induced metric on the fibers and $S_{V}X = \nabla^{c}_{V}X - \overline{\nabla}^{c}_{V}X$. To see (\ref{cst1}) consider
\begin{eqnarray*}
(\nabla^{c}_{V}g^{s})(X,Y) &=& Vg^{s}(X,Y)-g^{s}(\nabla^{c}_{V}X, Y)-g^{s}(X,\nabla^{c}_{V}Y)\\
&=& g^{s}(\overline{\nabla}^{c}_{V}X,Y)- g^{s}(X,\nabla^{c}_{V}Y)\\
&=&  -g^{s}(S_{V}X,Y) 
\end{eqnarray*}
Similarly we can prove the other equations.Now suppose $(T\mathbf{M},\nabla^{c},g^{s})$is a statistical manifold, then $\nabla^{c} g^{s}$ is symmetric. From (\ref{cst1}) and (\ref{cst2}) we get 
\begin{eqnarray*}
\mathcal{H}(S_{V}X) = A_{X}V - \overline{A}_{X}V
\end{eqnarray*}
From (\ref{cst3}) and (\ref{cst4}) we get 
\begin{eqnarray*}
\mathcal{V}(S_{X}V) = T_{V}X - \overline{T}_{V}X
\end{eqnarray*}
from (\ref{cst5}) $\hat{\nabla}^{c}\hat{g}^{s}$ is symmetric, so  $(T_{p}\mathbf{M},\hat{\nabla}^{c},\hat{g}^{s})$ is a statistical manifold for each $p\in \mathbf{M}$.  
Also from (\ref{cst6})  $(\mathbf{M},\nabla,g)$ is a statistical manifold.\\ Conversely, if all the four conditions hold then from the above equations $\nabla^{c}g^{s}$ is symmetric, so $(T\mathbf{M},\nabla^{c},g^{s})$  is a statistical manifold.
\end{proof}

\section*{Acknowledgements}
The first named author was supported by the Doctoral Research Fellowship from
the Indian Institute of Space Science and Technology (IIST), Department of Space, Government of India.

\bibliographystyle{plain}

\begin{thebibliography}{10}

\bibitem{abe2001affine}
N~Abe and K~Hasegawa.
\newblock An affine submersion with horizontal distribution and its
  applications.
\newblock {\em Differential Geometry and its Applications}, 14(3):235--250,
  2001.

\bibitem{amari2016information}
S~Amari.
\newblock {\em Information geometry and its applications}, volume 194.
\newblock Springer, 2016.

\bibitem{amari2007methods}
S~Amari and H~Nagaoka.
\newblock {\em Methods of information geometry}, volume 191.
\newblock American Mathematical Soc., 2007.

\bibitem{barndorff1988differential}
OE~Barndorff-Nielsen and PE~Jupp.
\newblock Differential geometry, profile likelihood, {L}-sufficiency and
  composite transformation models.
\newblock {\em The Annals of Statistics}, pages 1009--1043, 1988.

\bibitem{gud1990har}
S~Gudmundsson.
\newblock On the geometry of harmonic morphisms.
\newblock {\em Mathematical Proceedings of the Cambridge Philosophical
  Society}, 108:461--466, 1990.

\bibitem{kurose1990dual}
T~Kurose.
\newblock Dual connections and affine geometry.
\newblock {\em Mathematische Zeitschrift}, 203(1):115--121, 1990.

\bibitem{lauritzen1987statistical}
S~L Lauritzen.
\newblock Statistical manifolds.
\newblock {\em Differential geometry in statistical inference}, 10:163--216,
  1987.

\bibitem{matsuzoe2003statistical}
H~Matsuzoe and J~Inoguchi.
\newblock Statistical structures on tangent bundles.
\newblock {\em Appl. Sci}, 5:55--75, 2003.

\bibitem{nomizu1994affine}
K~Nomizu and T~Sasaki.
\newblock {\em Affine differential geometry: geometry of affine immersions}.
\newblock Cambridge university press, 1994.

\bibitem{n1983semi}
B~O'~Neill.
\newblock {\em Semi-Riemannian Geometry with Application to Relativity}.
\newblock Academic Press, 1983.

\bibitem{o1966fundamental}
B~O'Neill et~al.
\newblock The fundamental equations of a submersion.
\newblock {\em The Michigan Mathematical Journal}, 13(4):459--469, 1966.

\bibitem{o1967submersions}
Barrett O{'}Neill.
\newblock Submersions and geodesics.
\newblock {\em Duke Mathematical Journal}, 34(2):363--373, 1967.

\bibitem{lornia1993fe}
L~Ornea and G~Romani.
\newblock The fundamental equations of conformal submersions.
\newblock {\em Beitrage Algebra Geom}, 34(2):233--243, 1993.

\bibitem{yano1973tangent}
K~Yano and S~Ishihara.
\newblock {\em Tangent and cotangent bundles: Differential geometry},
  volume~16.
\newblock Dekker, 1973.

\end{thebibliography}
 \nocite{o1966fundamental}

\end{document}